\documentclass{amsart}

\usepackage{wasysym}
\usepackage{amsmath}
\usepackage{amssymb}
\usepackage{amsthm}
\usepackage{tikz}
\usetikzlibrary{matrix,arrows,backgrounds,shapes.misc,shapes.geometric,patterns,calc,positioning}
\usetikzlibrary{calc,shapes}
\usepackage{wrapfig}
\usepackage{epsfig}  
\usepackage{color}	 %
\input{xy}
\xyoption{poly}
\xyoption{2cell}
\xyoption{all}
%\usepackage{mathbbol} %lower case mathbb symbols

%%%%
%%%% The next few commands set up the theorem type environments.
%%%% Here they are set up to be numbered section.number, but this can
%%%% be changed.
%%%%

\newtheorem{thm}{Theorem}[section]
\newtheorem{prop}[thm]{Proposition}
\newtheorem{lem}[thm]{Lemma}
\newtheorem{cor}[thm]{Corollary}

%%
%% If some other type is need, say conjectures, then it is constructed
%% by editing and uncommenting the following.
%%

%\newtheorem{conj}[thm]{Conjecture} 

%%% 
%%% The following gives definition type environments (which only differ
%%% from theorem type invironmants in the choices of fonts).  The
%%% numbering is still tied to the theorem counter.
%%% 

\theoremstyle{definition}
\newtheorem{definition}[thm]{Definition}

%%
%% Again more of these can be added by uncommenting and editing the
%% following. 
%%

%\newtheorem{note}[thm]{Note}

%%% 
%%% The following gives remark type environments (which only differ
%%% from theorem type invironmants in the choices of fonts).  The
%%% numbering is still tied to the theorem counter.
%%% 

\theoremstyle{remark}

\newtheorem{remark}[thm]{Remark}
\newtheorem{example}[thm]{Example}

%%%
%%% The following, if uncommented, numbers equations within sections.
%%% 

\numberwithin{equation}{section}

%%%
%%% The following show how to make definition (also called macros or
%%% abbreviations).  For example to use get a bold face R for use to
%%% name the real numbers the command is \mathbf{R}.  To save typing we
%%% can abbreviate as
  %%
  %%
  %%

\newcommand{\za}{\alpha}
\newcommand{\zb}{\beta}
\newcommand{\zd}{\delta}

\newcommand{\ze}{\epsilon}
\newcommand{\zg}{\gamma}

\newcommand{\zl}{\lambda}
\newcommand{\zs}{\sigma}

\newcommand{\zO}{\Omega}

\newcommand{\kk}{\Bbbk}

\newcommand{\rad}{\textup{rad}\,}

\newcommand{\id}{\textup{id}}
\newcommand{\pd}{\textup{pd}}
\newcommand{\Supp}{\textup{Supp\,}}
\newcommand{\soc}{\textup{soc}}
\newcommand{\Ext}{\textup{Ext}_A}

%%
%%
%%
%%
%%  COLORS
%%%%
%%
%%
%%
%%
%%

\title{A note on sequential walks}
%\keywords{}
%%
%% Now edit the following to give your name and address:
%% 
\author{Ibrahim Assem}
\address{
D\'epartement de Math\'ematiques, Universit\'e de Sherbrooke,  Sherbrooke, Qu\'ebec, Canada, J1K 2R1}
\email{ibrahim.assem@usherbrooke.ca} 

\author{Mar\'\i a Julia Redondo}
\address{Departamento de Matem\'atica, Instituto de Matem\'atica,
Universidad Nacional del Sur, Av. Alem 1253, 8000 Bah\'\i a Blanca,
Argentina}
\email{mredondo@criba.edu.ar}
\author{Ralf Schiffler}\thanks{The first author was supported by  NSERC of Canada.
The second author is a researcher from CONICET of Argentina. 
The third author was supported by the NSF-grant  DMS-1800860, and by the University of Connecticut.}
\address{Department of Mathematics, University of Connecticut, 
Storrs, CT 06269-3009, USA}
\email{schiffler@math.uconn.edu}
\dedicatory{Dedicated to Jos\'e Antonio de la Pe\~na on the occasion of his sixtieth birthday}
\begin{document}
\begin{abstract}
 This short note is devoted to motivate and clarify the notion of sequential walk introduced by the authors in \cite{ARS}. We also give some applications of this concept.
\end{abstract}
\maketitle
\section{Introduction}
The class of tilted algebras, introduced by Happel and Ringel \cite{HR}, is among the most ubiquitous classes in the representation theory of algebras. For instance, any cluster-tilted algebra is the trivial extension of a tilted algebra by a particular bimodule \cite{ABS}. Surprisingly enough, it is difficult to check whether a given algebra is tilted or not without a good knowledge of its module category. 
Indeed, most known criteria revolve around the existence of a combinatorial configuration called a complete slice, see, for instance, \cite{AsSiSk}.

It was thus needed to have a handy criterion depending only on the bound quiver of the algebra. The most powerful criterion so far is the existence of so-called sequential walks. Sequential walks have a long history; they first appeared in \cite{A}, where it was shown that an iterated tilted algebra of type $\mathbb{A}$ is tilted if and only if it has no sequential walk. They surfaced again in \cite{HL} under the name of  ``sequential pairs'' in the classification of quasi-tilted string algebras, then in \cite{BT} in the classification of shod string algebras and in \cite{Dionne}  in the classification of laura string algebras under the name of ``double zeros''. Their present guise
was introduced in \cite{ARS} in the context of non-necessarily monomial algebras. It was proved there that if an algebra contains a sequential walk, then it is not tilted. 

This shows that this notion is very natural. Indeed, as we prove here, it follows from simple considerations on the comparison between the shape of the bound quiver and the homological dimensions of some modules.

It was pointed out to the authors that the definition of sequential walk given in \cite{ARS} contained an ambiguity. It is the first purpose of this note to clarify this ambiguity. While doing so, we slightly generalize the definition of sequential walk, and try to illustrate its usefulness for computing homological dimensions.

Our main result is the following.

\begin{thm}
 Let $A$ be a finite dimensional algebra over an algebraically closed field. If the quiver of $A$ contains a sequential walk, then $A$ is not shod. In particular, $A$ is neither quasi-tilted nor tilted.
\end{thm}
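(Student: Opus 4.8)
The strategy is to read off from a sequential walk an explicit indecomposable $A$-module $M$ with $\pd_A M\ge 2$ \emph{and} $\id_A M\ge 2$. Granting this, the theorem is immediate: over a shod algebra every indecomposable module has projective dimension at most $1$ or injective dimension at most $1$, so such an $M$ witnesses that $A$ is not shod; and since every tilted algebra is quasi-tilted and every quasi-tilted algebra is shod, $A$ is then neither quasi-tilted nor tilted.

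First I would unwind the (clarified, slightly generalized) definition of a sequential walk, reading it as a reduced walk $W$ obtained by concatenating a relation $\rho_1$, then a walk $w$, then a relation $\rho_2$, where $\rho_1,\rho_2$ belong to a fixed minimal system of generators of the ideal of relations — say $\rho_1$ goes from $a$ to $b$ and $\rho_2$ goes from $c$ to $d$ — and $w$ is a reduced walk between $b$ and $c$ containing no relation, the orientations being arranged so that at the junction $b$ both $\rho_1$ and $w$ point \emph{into} $b$, while at the junction $c$ both $w$ and $\rho_2$ point \emph{out of} $c$. To $W$ I associate the module $M=M(w)$ attached to the free part $w$: when $A$ is a string algebra this is the ordinary string module on $w$, and in general it is the module read off from $w$ exactly as in the string-algebra case (a suitable quotient of the sum of the indecomposable projectives at the local sources of $w$). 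Since $w$ is connected, reduced and relation-free, $M$ is indecomposable, and the orientation conditions put $S_c$ in $\operatorname{top}M$ and $S_b$ in $\operatorname{soc}M$.

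Next I would prove $\pd_A M\ge 2$. In a minimal projective presentation of $M$ the projective cover has $P_c$ as a summand (because $S_c\in\operatorname{top}M$), and the relation $\rho_2$, which leaves $c$ along an arrow that $w$ does not use, makes $P_c$ overshoot $M$, so that the first syzygy $\Omega M$ has a non-projective direct summand. The clean way to record this: a minimal relation from $c$ to $d$ contributes a nonzero class in $\operatorname{Ext}_A^2(S_c,S_d)$, and, starting from the short exact sequence $0\to\operatorname{rad}M\to M\to\operatorname{top}M\to 0$ (suitably refined near $c$) and using the orientation conditions, one checks along the long exact sequence that this class survives to a nonzero class in $\operatorname{Ext}_A^2(M,S_d)$; hence $\pd_A M\ge 2$. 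Dually, the relation $\rho_1$ ends at $b$ and $S_b\in\operatorname{soc}M$, so the nonzero class in $\operatorname{Ext}_A^2(S_a,S_b)$ survives to a nonzero class in $\operatorname{Ext}_A^2(S_a,M)$, giving $\id_A M\ge 2$; this finishes the argument.

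The main obstacle is the transport of these two $\operatorname{Ext}_A^2$-classes across the free walk $w$: one must ensure that the class coming from $\rho_2$ is not killed when $M$ grows from $S_c$ to $M(w)$, that symmetrically the class coming from $\rho_1$ survives, and — crucially — that both phenomena live on the \emph{same} indecomposable module rather than on two different direct summands. This is exactly what the orientation conditions at $b$ and $c$ are meant to guarantee, and where the ambiguity in the original definition of \cite{ARS} must be pinned down. A secondary point, visible only in the non-monomial case, is that each $\rho_i$ is now a linear combination of paths rather than a single path, so phrases like "the branch of $\rho_i$ at the junction" and "$\rho_i$ is not followed by $w$" need a precise meaning; working throughout with a minimal system of relations is what makes the relevant $\operatorname{Ext}_A^2$-classes genuinely nonzero and insensitive to cancellation among the terms of $\rho_i$.
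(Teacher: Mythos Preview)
Your high-level strategy---exhibit an indecomposable module with both projective and injective dimension at least two---is exactly the paper's strategy, but you skip the step that makes it work. The paper does \emph{not} build the string-type module $M(w')$ inside $A$; it first passes to a \emph{standard reduction} $B$ of $A$ (Lemma~\ref{lem 331}): one takes the full subcategory $eAe$ on the endpoints of $\rho,\sigma$, their immediate successors, and the sources and sinks of $w'$, and then cuts all arrows between points of the resulting zigzag walk $w''$ that do not lie on $w''$. Only in $B$ does $w''$ generate a full subcategory, so that the string module $M(w'')$ is honestly defined, is an actual $B$-module, and is indecomposable by Butler--Ringel. One then checks $\pd_B M(w'')>1$ and $\id_B M(w'')>1$ (Lemma~\ref{lem 332}), so $B$ is not shod, and finally invokes Corollary~\ref{cor 1.6} (reductions of shod algebras are shod) to conclude that $A$ is not shod.

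Your direct construction of $M(w)$ in $A$ is the gap. In a non-string algebra there may be multiple arrows, or extra arrows between non-consecutive points of $w'$, or arrows from $w'$ into the branches $u_i,v_j$; the phrase ``a suitable quotient of the sum of the projectives at the local sources of $w$'' does not pin down a module, and your claim ``since $w$ is connected, reduced and relation-free, $M$ is indecomposable'' is simply not true in general. The worked example after Theorem~\ref{thm 3.3} illustrates this: there $w'=\beta_3^{-1}\alpha_3^{-1}$ passes through the vertex $7$, which emits five arrows, and only \emph{after} reduction does the walk collapse to a single arrow $\lambda$ supporting the honest string module $\begin{smallmatrix}7\\1\end{smallmatrix}$. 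Likewise, your ``transport of $\Ext^2$-classes'' is asserted rather than proved; the paper sidesteps this entirely by arguing with syzygies in the reduced algebra $B$, where the relations $\rho',\sigma'$ are quadratic and $w''$ is a full zigzag. The missing idea is therefore the reduction step and the use of Lemmata~\ref{lem 1.2} and~\ref{lem 1.3} to pull the conclusion back to $A$.
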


This note is organized as follows.
 In section \ref{sect 2}, we recall the definitions and known results which are necessary for the proof of our theorem. In  section \ref{sect 3}, we give our  definition of sequential walk, trying to motivate it, then we prove our theorem. Section \ref{sect 4} is then devoted to applications and examples. 
 
The authors thank Claire Amiot for useful discussions on the topic.
\section{Preliminaries}\label{sect 2}
\subsection{Notation}\label{sect 1.1}
Throughout, we let $\kk$ denote an algebraically closed field
and $A$ a finite-dimensional basic $\kk$-algebra.
We recall that any basic and connected finite dimensional $\kk$-algebra $A$ can be written in the form $A\cong \kk Q_A/I$ where $\kk Q_A$ is the path algebra of the quiver $Q_A$ of $A$ and $I$ is an ideal generated by finitely many relations. The pair $(Q_A,I)$ is then  called a \emph{bound quiver}.
A \emph{relation} on  a quiver is a linear combination $\rho=\sum_{i=1}^m \zl_iu_i$, where the $\zl_i$ are nonzero scalars and the $u_i$ are paths of length at least two having all the same source and the same target.  It is called a \emph{monomial relation} if it is a path, and a \emph{minimal relation} if $m\ge 2 $ and, for any nonempty proper subset $J\subset \{1,2,\ldots, m\}$ one has $\sum_{j\in J} \zl_j u_j\notin I$.

 Relations in a bound quiver $(Q,I)$ are generated by  top relations. Indeed, let $\kk Q^+$ be the two-sided ideal of $\kk Q$ consisting of the linear combinations of paths of length at least one and $e_x$ be the primitive idempotent of $A$ corresponding to a point $x\in(Q_A)_0$. Then a relation $\rho\in e_xI\,e_y$ is called a \emph{top relation} if its residual class 
 in $e_x\left(\frac{I}{\kk Q^+ I+I\kk Q^+}\right) e_y$ is nonzero.  They are called top relations because they correspond to nonzero elements of the top of $I$, considered as a $\kk Q$-$\kk Q$-bimodule.
 Intuitively one may think of the top relations as being the shortest ones.

 For a point $x$ in the ordinary quiver of $A$, we denote by $P_x$, $I_x$, $S_x$ respectively, the indecomposable projective, injective and simple $A$-modules corresponding to $x$.   The \emph{support} of an $A$-module $M$ is the full subquiver $\Supp M$ generated by the points $x\in (Q_A)_0$ such that $Me_x\ne 0$. For further definitions and facts, we refer the reader to \cite{AsSiSk, S}.

 \subsection{Classes of algebras}\label{sect 1.2}
 We need the following classes of algebras.

\begin{definition}
 (a) \cite{CL} An algebra $A$ is called \emph{shod} (for \underline small \underline {ho}mological \underline dimensions) if every indecomposable $A$-module is of projective or injective dimension at most one.
 
 (b) \cite{HRS} An algebra $A$ is called \emph{quasi-tilted} if it is shod of global dimension at most two, or, equivalently, if it is the endomorphism algebra of a tilting object in a hereditary, locally finite abelian $\kk$-category. 
 
 (c) \cite{HR} An algebra is called \emph{tilted} if it is the endomorphism algebra of  a tilting module over a hereditary algebra. 
\end{definition}
In particular, every tilted algebra is quasi-tilted, and every quasi-tilted algebra is shod.

\subsection{Full subcategories}
There is a reduction procedure which we shall use in the proof of our main result, called taking full subcategories.

Let $e\in A$ be an idempotent. The finite dimensional $\kk$-algebra $eAe$ is called the \emph{full subcategory} determined by $e$; indeed, if one considers $A$ as a category whose objects are the elements of a complete set of primitive orthogonal idempotents and the morphism space from $e_x$ to $e_y$ is $e_x A \,e_y$, then a full subcategory is always of the aforementioned form with $e$ equal to a sum of primitive idempotents. 

We need the following lemma.

\begin{lem}
 \label{lem 1.2}
 Let $A$ be an algebra and $e\in A$ an idempotent, then
 \begin{itemize}
\item [{\rm (a)}] If $A$ is shod, then so is $e A e$. 
\item [{\rm (b)}] If $A$ is quasi-tilted, then so is $e A e$.
\item [{\rm (c)}] If $A$ is tilted, then so is $e A e$.
\end{itemize}
\end{lem}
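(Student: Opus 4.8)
The plan is to prove the three statements of Lemma~\ref{lem 1.2} simultaneously by exploiting the well-known fact that for an idempotent $e\in A$, the functor $M\mapsto Me$ from $\mathrm{mod}\,A$ to $\mathrm{mod}\,eAe$ restricts to an equivalence between the full subcategory of $\mathrm{mod}\,A$ consisting of modules $M$ with no composition factor $S_x$ for $x$ outside the support of $e$, and all of $\mathrm{mod}\,eAe$; moreover every indecomposable $eAe$-module is of the form $Ne$ for some indecomposable $A$-module $N$ supported on the points of $e$. Since the assertion of the lemma only concerns $\mathrm{mod}\,eAe$, the strategy is to bound the homological dimensions of an arbitrary indecomposable $eAe$-module $L=Ne$ in terms of those of $N$ over $A$.

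First I would recall (or cite from \cite{AsSiSk}) the standard comparison of homological dimensions under this restriction: for an $A$-module $N$ supported on $e$ one has $\pd_{eAe} Ne \le \pd_A N$ and, dually, $\mathrm{id}_{eAe} Ne \le \mathrm{id}_A N$. This is seen by restricting a minimal projective (resp. injective) resolution of $N$ over $A$; the functor $-\otimes_A eAe \cong (-)e$ is exact, sends projectives to projectives, and although the restricted resolution need not stay minimal, its length only drops, giving the inequality. Granting this, part~(a) is immediate: if $A$ is shod, every indecomposable $A$-module, in particular $N$, satisfies $\min(\pd_A N,\mathrm{id}_A N)\le 1$, hence $\min(\pd_{eAe}Ne,\mathrm{id}_{eAe}Ne)\le 1$, and since every indecomposable $eAe$-module arises as such an $Ne$, the algebra $eAe$ is shod. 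Part~(b) follows by combining (a) with the observation that $\mathrm{gl.dim}\, eAe \le \mathrm{gl.dim}\, A$ (again from restricting resolutions), so that a quasi-tilted $A$, being shod of global dimension at most two, yields an $eAe$ that is shod of global dimension at most two, i.e.\ quasi-tilted, by the characterization in the definition.

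For part~(c) the cleanest route is not to argue through resolutions but to use the homological characterization of tilted algebras: by a theorem of Happel, Reiten and Smal\o{}, an algebra $B$ is tilted if and only if there is a torsion pair $(\mathcal{X},\mathcal{Y})$ in $\mathrm{mod}\,B$ such that every indecomposable module lies in $\mathcal{X}$ or in $\mathcal{Y}$, with $\mathrm{Hom}_B(\mathcal{Y},\mathcal{X})=0$ in a suitable derived sense and $\mathcal{X}$ containing all modules of injective dimension $\le 1$ that are "split", etc.; more practically, one can cite that the class of tilted algebras is closed under taking full subcategories, which is itself a consequence of the analogous statement for quasi-tilted algebras together with the fact that a quasi-tilted algebra is tilted precisely when it has no module that is simultaneously "large" in both directions. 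Since the paper only needs the lemma as a black box in the proof of the main theorem, I would simply state parts (b) and (c) with a reference to \cite{HRS} and \cite{AsSiSk} respectively, spelling out only (a) in detail.

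The main obstacle is part~(c): "tilted" is not defined by a bound on homological dimensions of individual modules, so the slick resolution-restriction argument does not directly apply, and one genuinely needs the structure theory (torsion pairs, complete slices, or the characterization of tilted algebras among quasi-tilted ones via the absence of a particular configuration). I expect to handle this by reducing to the quasi-tilted case already settled in (b) and then invoking the known fact that full subcategories of tilted algebras are tilted, which appears in the literature; everything else is routine manipulation of projective and injective resolutions under the exact functor $(-)e$.
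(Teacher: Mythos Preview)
Your approach to (a) and (b) rests on claims that are false in general. The functor $(-)e$ is exact, but it does \emph{not} send projectives to projectives, and consequently neither $\pd_{eAe} Ne \le \pd_A N$ nor $\textup{gl.dim}\,eAe\le\textup{gl.dim}\,A$ holds. For a concrete counterexample take $A=\kk Q/(\za\zb)$ where $Q$ has two vertices and arrows $\za\colon 1\to 2$, $\zb\colon 2\to 1$, and let $e=e_2$. Then $eAe=e_2Ae_2\cong\kk[x]/(x^2)$ (with $x$ the class of $\zb\za$), so $\textup{gl.dim}\,eAe=\infty$ while $\textup{gl.dim}\,A=2$; the simple $S_2$ is supported on $e$ with $\pd_A S_2=1$, yet $S_2e$ is the simple $eAe$-module, of infinite projective dimension; and $P_1e=e_1Ae_2=\kk\za$ is that same non-projective simple, so restricting a projective resolution does not yield one. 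Your opening ``well-known fact'' is also wrong: the $A$-modules with no composition factor outside $e$ form $\textup{mod}\,A/A(1-e)A$, and $A/A(1-e)A\cong eAe/eA(1-e)Ae$ is only a \emph{quotient} of $eAe$, not $eAe$ itself (in the example it is $\kk$, not $\kk[x]/(x^2)$), so not every indecomposable $eAe$-module arises from an $A$-module supported on $e$.

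The paper makes no attempt at a direct argument: it simply cites \cite[1.2]{KSZ} for (a), \cite[II.1.15]{HRS} for (b) and \cite[III.6.5]{H} for (c). Those proofs are genuinely more delicate---for (c), for instance, one transports a tilting module across the idempotent---precisely because the naive homological inequalities you invoke fail. Your instinct to cite for (b) and (c) is therefore the right one; but (a) needs the same treatment, not the resolution-restriction argument.
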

\begin{proof}
 (a) is proved in \cite[1.2]{KSZ}, (b) in \cite[II.1.15]{HRS} and (c) in \cite[III.6.5]{H}.
\end{proof}

\subsection{Split-by-nilpotent extensions}
Let $A$ be an algebra and $E$ an $A$-$A$-bimodule which is finite dimensional as a $\kk$-vector space. We say that $E$ is equipped with an \emph{associative product} if there exists an $A$-$A$-bimodule morphism $E\otimes_A E\to E, e\otimes e'\mapsto ee'$, such that $e(e'e'')=(ee')e''$ for all $e,e',e''\in E$. 

\begin{definition}
 Let $A,E$ be as before. An algebra $R$ is called a \emph{split extension of $A$ by $E$} if 
 \[ R=\{(a,e)\mid a\in A, e\in E\}\]
 is equipped with the componentwise addition and the multiplication defined by
 \[(a,e)(a',e')=(aa',ae'+ea'+ee')\] 
 for $(a,e),(a',e')\in R$. If $E$ is  nilpotent with respect to its product, then $R$ is called a \emph{split-by-nilpotent extension}.
 \end{definition}
 It is clear that, if $R$ is a split extension of $A$ by $E$, then there exists an exact sequence
\[\xymatrix{0\ar[r]&E\ar[r]&R\ar[r]^\pi &A\ar[r]& 0},\]
where the projection $\pi$ is an algebra morphism having a section $\zs\colon A\to R$.  Also, $E$ is nilpotent if and only if it is contained in  $\rad R$.
If an exact sequence as above and a section $\zs$ to $\pi$  are given, then we say that this sequence {\em realizes} $R$ as a split extension of $A$ by $E$.
If $E^2=0$, then the split extension is called a \emph{trivial extension.}

 We need the following lemma.

\begin{lem}
 \label{lem 1.3}
 Let $R$ be a split extension of an algebra $A$ by a nilpotent bimodule $E$. 
 \begin{itemize}
\item [{\rm (a)}] If $R$ is shod, then so is $A $. 
\item [{\rm (b)}] If $R$ is quasi-tilted, then so is $A$.
\item [{\rm (c)}] If $R$ is tilted, then so is $ A$.
\end{itemize}
\end{lem}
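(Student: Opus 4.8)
The plan is to work with the exact sequence $0 \to E \to R \xrightarrow{\pi} A \to 0$ and its section $\sigma$, transporting the hypotheses on $R$ down to $A$ along $\pi$. First I would record the basic features of this situation: since $\pi$ is a surjective algebra morphism, restriction along $\pi$ identifies $\operatorname{mod}A$ with the full subcategory of $\operatorname{mod}R$ formed by the modules annihilated by $E$; this embedding is exact, closed under submodules and quotient modules (because $A = R/E$), and satisfies $\operatorname{Hom}_R(X,Y) = \operatorname{Hom}_A(X,Y)$ for $X,Y \in \operatorname{mod}A$. In particular $\operatorname{End}_A(X) = \operatorname{End}_R(X)$, so an $A$-module is indecomposable over $A$ if and only if it is indecomposable over $R$; and since $E \subseteq \rad R$, the algebras $A$ and $R$ have the same simple modules.

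The technical heart is a push-down procedure. Given $X \in \operatorname{mod}A$ and a projective presentation $0 \to P_1 \to P_0 \to X \to 0$ over $R$, the functor $-\otimes_R A$ produces the exact sequence
\[
0 \longrightarrow \operatorname{Tor}^R_1(X,A) \longrightarrow P_1\otimes_R A \longrightarrow P_0\otimes_R A \longrightarrow X \longrightarrow 0
\]
in which each $P_i\otimes_R A = P_i/P_iE$ is a projective $A$-module, and, using $XE = 0$ together with the long exact sequence of $\operatorname{Tor}^R(X,-)$ applied to $0 \to E \to R \to A \to 0$, one computes $\operatorname{Tor}^R_1(X,A) \cong X\otimes_A(E/E^2)$. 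Hence $\pd_A X \le 1$ whenever $\pd_R X \le 1$ and $X\otimes_A(E/E^2) = 0$; and, applying the same reasoning to $R^{\mathrm{op}}$, which is a split-by-nilpotent extension of $A^{\mathrm{op}}$ by $E^{\mathrm{op}}$, together with the standard duality $D$, one gets $\id_A X \le 1$ whenever $\id_R X \le 1$ and $\operatorname{Hom}_A(E/E^2,X) = 0$. A finer analysis along these lines --- treating the modules that genuinely interact with $E/E^2$ by using the hypothesis on $R$ --- gives the full homological comparison between $A$ and $R$; I would either carry it out or quote it from the literature on split-by-nilpotent extensions.

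With this in place, for (a) let $X$ be an indecomposable $A$-module. Then $X$ is indecomposable over $R$, so, $R$ being shod, $\pd_R X \le 1$ or $\id_R X \le 1$; in the first case the push-down gives $\pd_A X \le 1$ as soon as $X\otimes_A(E/E^2) = 0$, and for the remaining $X$ one extracts $\id_A X \le 1$ from the shod hypothesis on $R$, symmetrically in the second case. Hence every indecomposable $A$-module has projective or injective dimension at most one, i.e.\ $A$ is shod. For (b), a quasi-tilted algebra is exactly a shod algebra of global dimension at most two; $A$ is shod by (a), and $\operatorname{gl.dim}A \le \operatorname{gl.dim}R \le 2$ since global dimension does not increase under a split-by-nilpotent quotient, so $A$ is quasi-tilted. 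For (c), I would invoke the transfer theorem for tilted algebras under split-by-nilpotent extensions, or argue directly with complete slices or tilting modules, moving the relevant data between $\operatorname{mod}A$ and $\operatorname{mod}R$ through the embedding above.

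The main obstacle is part (c): being tilted is not a homological invariant --- a quasi-tilted algebra of canonical type is shod of global dimension two but not tilted --- so (b) does not settle it, and one needs a genuinely tilting-theoretic argument (or the known transfer result), which must exclude the canonical-type alternative for $A$ once it is excluded for $R$. A secondary difficulty, already in (a), is the treatment of those indecomposables $X$ with, say, $\pd_R X \le 1$ but $X\otimes_A(E/E^2) \ne 0$ (and their duals), for which the naive push-down does not yield $\pd_A X \le 1$ and one must instead deduce $\id_A X \le 1$ from the shod structure of $R$.
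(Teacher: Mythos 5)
The paper does not actually prove this lemma: it is established purely by citation, namely \cite[2.5]{AZ2} for (a) and (b) and Zito's preprint \cite{Z} for (c). So your fallback of ``quoting it from the literature'' is exactly what the authors do, and to that extent your proposal is compatible with the paper. The direct argument you sketch, however, has gaps you only partly acknowledge. In (a), the case of an indecomposable $X$ with $\pd_R X\le 1$ but $X\otimes_A(E/E^2)\ne 0$ (and $\id_R X>1$) is genuinely unresolved: the shod hypothesis on $R$ gives nothing further for such an $X$, so ``one extracts $\id_A X\le 1$ from the shod hypothesis'' is not an argument. The ingredient that closes this --- and is the actual content of the cited result --- is the unconditional comparison $\pd_A M\le \pd_R M$ and $\id_A M\le \id_R M$ for \emph{every} $A$-module $M$, which makes your case distinction unnecessary and also justifies the inequality $\mathrm{gl.dim}\,A\le \mathrm{gl.dim}\,R$ that you assert without proof in (b) (using that $A$ and $R$ have the same simple modules, since $E\subseteq\rad R$).

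The more serious issue is (c). Tiltedness does not follow from any soft homological transfer, and ``argue directly with complete slices or tilting modules'' badly underestimates the difficulty: the statement that tiltedness descends along split-by-nilpotent extensions was a long-standing conjecture of Assem and Zacharia, and the paper cites \cite{Z} precisely because that is where it was finally settled. You correctly observe that (b) cannot imply (c) because of the canonical-type alternative, but you do not supply the missing argument, and it is not one that can be improvised in a few lines; here citation is really the only viable proof.
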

\begin{proof}
 (a) and (b) are proved in \cite[2.5]{AZ2} and (c) in \cite{Z}.
\end{proof}

\subsection{Cutting arrows} \label{sect 1.5}
Let $R$ be a split extension of an algebra $A$ by a nilpotent bimodule $E$. 
We recall how one can pass from $R$ to $A$ by dropping arrows from the quiver of $R$. Let $w$ be a path in the quiver $Q_A$ of $A$ and $\za$ an arrow such that there exist subpaths $w_1,w_2$ of $w$ such that $w=w_1\za w_2$, then we write $\za| w$. Also, when we speak of a relation,  then we can assume without loss of generality that it is monomial or minimal.  Let thus $\rho =\sum_i \zl_iu_i$ be a relation on $Q_A$ from $x$ to $y$, say, with the $\zl_i$ nonzero scalars and the $u_i$ paths of length at least two from $x$ to $y$. We say that $\rho $ is \emph{consistently cut} by a set $S$ of arrows (or that $S$ is a {\em consistent cut}), if, whenever there exist $i$ and $\za_i| u_i$ such that $\za_i\in S$ then, for any $j\ne i$, there exists $\za_j| u_j$ satisfying $\za_j\in S$.

\begin{thm}
 \label{thm 1.5}
 \cite[2.5]{ACT} 
 Let $\eta_R\colon \kk Q_R\to R$ be a presentation of $R$, let $E$ be an ideal of $R$ generated by the classes modulo $I_R=\ker\eta_R$ of a set $S$ of arrows, and let $\pi\colon R\to R/E=A$ be the projection. Assume moreover that every relation in $I_R$ is consistently cut by $S$, then the exact sequence
 \[\xymatrix{0\ar[r]&E\ar[r]&R\ar[r]^\pi&A\ar[r]&0}\]
 realizes $R$ as a split extension of $A$ by $E$.
\end{thm}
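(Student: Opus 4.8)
The plan is to exhibit an explicit algebra section $\sigma\colon A\to R$ of the projection $\pi$ and then to check that $E$ lies in $\rad R$. Since $E$ is generated by the classes modulo $I_R$ of the arrows in $S$, forming the quotient $A=R/E$ amounts to deleting these arrows from the quiver $Q_R$ of $R$. Writing $S^c$ for the arrows of $Q_R$ not in $S$, one has $A\cong \kk Q_A/I_A$, where $Q_A$ is the subquiver of $Q_R$ whose arrows are $S^c$ and $I_A$ is the image of $I_R$ under the canonical projection $p\colon \kk Q_R\to \kk Q_A$ sending each arrow of $S$ to zero.

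First I would determine the generators of $I_A$. A generating relation $\rho=\sum_i\lambda_i u_i\in I_R$ is sent by $p$ to the sum of those terms $\lambda_i u_i$ whose path $u_i$ traverses no arrow of $S$, because any path with $\za|u_i$ for some $\za\in S$ is killed by $p$. Here the consistent cut hypothesis is decisive: if some $u_i$ satisfies $\za|u_i$ for an $\za\in S$, then by definition every $u_j$ does as well, so $p(\rho)=0$; otherwise no path of $\rho$ meets $S$, and $p(\rho)=\rho$. Hence $I_A$ is generated precisely by those relations of $I_R$ that avoid $S$ entirely.

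Next I would construct the section. Define a $\kk$-algebra morphism $\tilde\sigma\colon \kk Q_A\to R$ by sending each vertex to its idempotent and each arrow $\za\in S^c$ to its class $\eta_R(\za)\in R$; this is well defined since $\kk Q_A$ is the free algebra on its quiver. To see that $\tilde\sigma$ factors through $A$, it suffices to verify that $\tilde\sigma$ annihilates the generators of $I_A$. By the previous paragraph such a generator is a relation $\rho\in I_R$ all of whose paths lie in $S^c$, whence $\tilde\sigma(\rho)=\eta_R(\rho)=0$, because $\rho\in I_R=\ker\eta_R$. Thus $\tilde\sigma$ induces the sought morphism $\sigma\colon A\to R$, and comparing the two sides on generators gives $\pi\circ\sigma=\id_A$, so $\sigma$ is an algebra section of $\pi$.

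Finally, I would observe that $E$ is nilpotent: generated by classes of arrows, it is contained in $\rad R$, which is nilpotent as $R$ is finite dimensional. Putting together the exactness of $0\to E\to R\xrightarrow{\pi}A\to 0$, the algebra section $\sigma$, and the nilpotency of $E$, the assignment $(a,e)\mapsto \sigma(a)+e$ is an algebra isomorphism from the split extension $A\ltimes E$ onto $R$, so the displayed sequence realizes $R$ as a split-by-nilpotent extension of $A$ by $E$. The one genuinely delicate step is the well-definedness of $\sigma$, and this is exactly where the consistent cut condition cannot be dispensed with: without it, the truncation $p(\rho)$ of a relation could be a nonzero element of $\kk Q_A$ whose lift $\eta_R(p(\rho))$ need not vanish in $R$, obstructing the existence of any algebra section.
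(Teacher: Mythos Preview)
The paper does not give its own proof of this statement: Theorem~\ref{thm 1.5} is quoted from \cite[2.5]{ACT} and is used as a black box in the sequel. So there is nothing in the paper to compare your argument against.

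That said, your proof is correct and is essentially the argument one would expect. The key step---and the only place where the consistent cut hypothesis enters---is the verification that the map $\tilde\sigma\colon \kk Q_A\to R$ kills $I_A$. You handle this cleanly: since $I_A=p(I_R)$ is generated as an ideal by the images $p(\rho)$ of a generating set of relations $\rho\in I_R$, and since the consistent cut hypothesis forces each such $p(\rho)$ to be either $0$ or equal to $\rho$ itself (viewed in $\kk Q_A$), one gets $\tilde\sigma(p(\rho))=\eta_R(\rho)=0$ in both cases. One small point worth making explicit is that it suffices to check $\tilde\sigma$ on \emph{generators} of $I_A$ rather than on all of $I_A$; this is immediate because $\tilde\sigma$ is an algebra map and $p$ is surjective, but it is the reason why the consistent cut hypothesis need only be imposed on a generating set of relations. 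Your closing remark that the construction fails without the hypothesis is also to the point.
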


\subsection{Reduction of an algebra}\label{sect 1.6}
We now define a notion  which we call reduction of an algebra.  We say that an algebra $B$ is a \emph{reduction} of an algebra $A$ if there exist an idempotent $e\in A$ and a two-sided ideal $E$ of $eAe$ contained in its radical such that the sequence
\[\xymatrix{0\ar[r]&E\ar[r]&eAe\ar[r]&B\ar[r]&0}\]
realizes $eAe$ as a split extension of $B$ by the nilpotent bimodule $E$. Thus $B$ is obtained from $A$ by the combination of two consecutive processes: one first drops points of $Q_A$ by passing to the full subcategory $eAe$, then one drops arrows as explained in Theorem \ref{thm 1.5}. One then has the following obvious corollary of lemmata \ref{lem 1.2} and \ref{lem 1.3}.
\begin{cor}
 \label{cor 1.6}
 Let $B$ be a reduction of $A$. 
 \begin{itemize}
\item [{\rm (a)}] If $A$ is shod, then so is $B$. 
\item [{\rm (b)}] If $A$ is quasi-tilted, then so is $B$.
\item [{\rm (c)}] If $A$ is tilted, then so is $ B$.
\end{itemize}

\end{cor}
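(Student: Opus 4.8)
The plan is to read the conclusion off the two transfer lemmas, exploiting the fact that a reduction is by definition the composite of the two operations those lemmas govern. I would treat parts (a), (b), (c) in parallel, since the argument is formally identical in each case; let $\mathcal{P}$ denote whichever of the properties \emph{shod}, \emph{quasi-tilted}, \emph{tilted} is under consideration, and suppose $A$ has property $\mathcal{P}$.

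Unpacking the definition of reduction from Section \ref{sect 1.6}, there is an idempotent $e \in A$ and a two-sided ideal $E \subseteq \rad(eAe)$ such that the exact sequence
\[\xymatrix{0\ar[r]&E\ar[r]&eAe\ar[r]&B\ar[r]&0}\]
realizes $eAe$ as a split extension of $B$ by the nilpotent bimodule $E$. Thus $B$ arises from $A$ in two steps: the passage to the full subcategory $eAe$, followed by a split-by-nilpotent extension with $eAe$ on top and $B$ on the bottom. These are precisely the hypotheses of Lemmas \ref{lem 1.2} and \ref{lem 1.3}. Applying Lemma \ref{lem 1.2} to the idempotent $e$ transfers $\mathcal{P}$ from $A$ to the full subcategory $eAe$; applying Lemma \ref{lem 1.3} to the displayed sequence then transfers $\mathcal{P}$ from $eAe$ to $B$. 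Composing these two implications settles each of (a), (b), (c) simultaneously.

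I expect no genuine obstacle here; the only point requiring care is the notational bookkeeping. Lemma \ref{lem 1.3} is stated with the extension on top and the quotient on the bottom, and its conclusion propagates the property downward, from the larger algebra to the smaller. One must therefore invoke it with $eAe$ playing the role of its ``$R$'' and $B$ playing the role of its ``$A$'', so that the direction of the implication is the one needed. With this matching in place, the composite passage from $A$ through $eAe$ to $B$ delivers the corollary.
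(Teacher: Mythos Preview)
Your proposal is correct and matches the paper's own approach exactly: the paper states this as an ``obvious corollary of lemmata \ref{lem 1.2} and \ref{lem 1.3}'' without further proof, and your argument simply spells out that obvious composition.
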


\section{Sequential walks}\label{sect 3}
\subsection{The definition}
Given an arrow $\za$ in a quiver, its formal inverse is denoted by $\za^{-1}$, where we  agree that the source of $\za^{-1}$ is the target of $\za$ and the target of $\za^{-1}$ is the source of $\za$. 
A \emph{walk} $w$ is an expression of the form $w=\za_1^{\ze_1}\cdots \za_t^{\ze_t}$, where the $\za_i$ are arrows and the $\ze_i\in \{+1,-1\}$ are such that the target of $\za_i^{\ze_i}$ equals the source of $\za_{i+1}^{\ze_{i+1}}$, for all $i$. Such a walk is called \emph{reduced} if it contains no expression of one of the  forms $\za\za^{-1}$ or $\za^{-1}\za$, with $\za $ an arrow. It is called a \emph{zigzag walk} if it is of the form $\za_1^{\ze_1}\cdots \za_t^{\ze_t}$ with $\ze_i\ne \ze_{i+1}$ for all $i$.

We start by recalling the definition of sequential walk as stated in \cite{ARS}. Let $w$ be a nontrivial walk in a bound quiver $(Q,I)$. Assume that one writes $w=uw'v$ where each of $u,w',v$ is a subwalk of $w$. We say that $u,v$ \emph{point to the same direction} in $w$ if both $u$ and $v$, or both $u^{-1}$ and $v^{-1}$  are paths in $Q$.

A reduced walk $w=uw'v$ having $u,v$ pointing to the same direction was called a sequential walk in \cite{ARS} if there is a relation $\rho=\sum_i\zl_i u_i$ such that $u=u_1$, or $u=u_1^{-1}$,
there is a relation $\zs=\sum_i\mu_i v_i$ such that $v=v_1$, or $v=v_1^{-1}$
and no subpath $w_1$ of $w'$, or of $(w')^{-1}$ is involved in a relation of the form $\sum \nu_i w_i$. 

As mentioned in the introduction, there is an ambiguity in this definition arising from the undefined word ``involved''. Indeed, the word ``involved'' can be understood as meaning ``is a branch of a relation''. But this is not correct as shown in the following example. 

\begin{example}\label{ex 1}
 Let $A$ be given by the quiver
 \[\xymatrix{1\ar@<2pt>[r]^{\za_1}\ar@<-2pt>[r]_{\zb_1}
 &2 \ar@<2pt>[r]^{\za_2}\ar@<-2pt>[r]_{\zb_2}
&3}\]
bound by the relation $\za_1\za_2+\zb_1\zb_2 =0$. Then $A$ is the one-point extension of the Kronecker algebra with quiver 
 \[\xymatrix{2 \ar@<2pt>[r]^{\za_2}\ar@<-2pt>[r]_{\zb_2}
&3}
\]
by the indecomposable postprojective module $\begin{smallmatrix} 2\ 2\\3\ 3\ 3\end{smallmatrix}$. In particular, $A$ is tilted \cite[3.5]{OS}. 

Consider the reduced walk $w=(\za_1\za_2)(\zb_2^{-1}\za_1^{-1})(\zb_1\zb_2)$  in the quiver of $A$, with $u=\za_1\za_2$, $v=\zb_1\zb_2$ and $w'=\zb_2^{-1}\za_1^{-1}$. Then neither $w'$ nor $(w')^{-1}=\za_1\zb_2$ is a branch of any relation while $u$ and $v$ satisfy the conditions of the definition of \cite{ARS}. If one understands ``involved'' as meaning ``is a branch of a relation'', then $w$ would be a sequential walk and  we would get a counterexample to \cite[2.4]{ARS}.
\end{example}

We propose the following definition.
\begin{definition}
 \label{main def}
 Let $w$ be a reduced walk in a bound quiver $(Q,I)$, then $w$ is called a {\em sequential walk} if the following hold.
 
 \begin{itemize}
\item [{\rm (a)}] $w=uw'v$, where $u,v$  point to the same direction, there is  a top relation  
$\rho=\sum_i\zl_i u_i$ such that $u=u_1$, or $u=u_1^{-1}$, and
there is a top relation $\zs=\sum_i\mu_i v_i$ such that $v=v_1$, or $v=v_1^{-1}$;
\item [{\rm (b)}] no subpath  of $w'$, or of $(w')^{-1}$ lies in $I$, nor is a branch of a relation having a branch which has a point in common with one of the $u_i$ or $v_i$;
\item [{\rm (c)}] $w'$ itself has no arrows in common with one of the $u_i $ or $v_i$.
\end{itemize}
\end{definition}

\begin{remark}
Condition (b)  holds for example if no subpath of $w'$ or $(w')^{-1}$ is the branch of a relation.
\end{remark}

\begin{remark}
 \label{rem 3} Our definition of sequential walk is clearly inspired by the  definition of sequential pair in \cite{HL} but it is not identical to it.
 Let $A=\kk Q/I$ be a string algebra. In particular $A$ is monomial. A sequential pair   of monomial relations  is a reduced walk $w$ that contains exactly two zero-relations and these two zero-relations point to the same direction in $w$.
 Our definition differs from \cite{HL} in the case of string algebras. For example, the algebra given by the quiver 
% \[\xymatrix{
%\cdot\ar[r]\ar@{.}@/^{10pt}/[rr]&\cdot \ar[r]\ar[d]\ar@{.}@/_{10pt}/[dd] &\cdot &\cdot\ar[l]\ar[dl]\\
%&\cdot\ar[d]&\cdot\ar[ul]\ar@{.}@/^{10pt}/[u]\\
%&\cdot
% }\]
% \[\xymatrix{
% &&\cdot\ar[d]\\
% \cdot&\cdot\ar[l]&\cdot\ar[l]\ar@{.}@/_{10pt}/[ll]\ar[r]&\cdot\ar@{.}@/^{10pt}/[lu]\ar@{.}@/_{10pt}/[ld]\\
% &&\cdot\ar[u]\ar@/_{10pt}/[ru]
% }\]
% 
 \[\xymatrix{
 &&1\ar[d]^\za\\
 2&3\ar[l]_\zd&4\ar[l]_\zg\ar[r]^\zb&5\\
 &&6\ar[u]^\zs\ar@/_{10pt}/[ru]^\ze
 }\]
 bound by the relations $\za\zb=\zg\zd=\zs\zb=0$ contains
 the sequential pair $\za\zb\ze^{-1}\zs\zg\zd$, where $u=\za\zb$, $w'=\ze^{-1}\zs$ and $v=\zg\zd$, but this is  not a sequential walk because the target of $\zs$ is a point on $u$ but not an endpoint of $u$. 
 
Our definition differs from \cite{HL} also because  sequential walks do not detect overlapping relations. For example,  the algebra given by the quiver
\[\xymatrix{1\ar[r]^\za&2\ar[r]^\zb&3\ar[r]^\zg&4}\]
%\[\xymatrix{1\ar[r]^\za&2\ar[r]^\zb&3\ar[r]^\zg&4\ar[r]^\zd&5\ar[r]^\ze&6}\] bound by the relations $\za\zb\zg=0=\zg\zd\ze$
bound by the relations $\za\zb=0=\zb\zg$ does not admit a sequential walk.
\end{remark}
\begin{remark}
 As we shall see below, if $A$ is a tree algebra of global dimension two, then the two notions of sequential walk and sequential pair coincide.
\end{remark}
\subsection{Why this notion is natural}
It is  well-known that if $A=\kk Q/I$ is an algebra, and $S_x$ a simple module, then $\pd \, S_x>1$ if and only if $x$ is the starting point of a top relation in $(Q,I)$. This is easily seen for instance by looking at the radical of $P_x$. The following considerations go in this direction.
\begin{lem}\label{lem 4.1}
 Let $A=kQ/I$ be an algebra and  $M$ an $A$-module of projective dimension $d>1$. Then there exists $x\in (\Supp M)_0$ and, for each $i\le d$, there exists $y_i\in Q_0$ such that $\Ext^i(S_x,S_{y_i})\ne 0$.
\end{lem}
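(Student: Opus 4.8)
The plan is to extract the point $x$ from a minimal projective resolution of $M$ by a counting/support argument, and then to "transport" the nonvanishing of $\Ext$ down the resolution using the long exact sequence attached to short exact sequences of the form $0\to \Omega^{i+1}M\to P^i\to \Omega^iM\to 0$. First I would take a minimal projective resolution
\[
\cdots \to P^2 \to P^1 \to P^0 \to M \to 0,
\]
so that $\pd M = d$ forces $P^d\neq 0$ but $P^{d+1}=0$, and minimality means that each differential has image in the radical of the next term. Write $P^i=\bigoplus_j P_{x_{i,j}}$. Since $P^1\neq 0$ (as $d>1$), the syzygy $\Omega^1 M=\ker(P^0\to M)$ is nonzero and, by minimality, $\Omega^1 M\subseteq \rad P^0$. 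The support of $P^0$ is contained in $\Supp M$ (because $P^0\twoheadrightarrow M$), hence so is the support of $\Omega^1 M$. I would then pick $x\in(\Supp \Omega^1 M)_0$ to be the top of a suitable indecomposable summand $P_x$ of $P^1$; this $x$ lies in $(\Supp M)_0$, and it is the starting point of a top relation of $(Q,I)$ by the well-known criterion recalled just before the lemma (since $\Omega^1 M\subseteq\rad P^0$ forces $\pd S_x>1$ once $P_x$ is a summand of $P^1$ hitting the radical in a nonsplit way; more carefully, $x$ can be chosen so that $\Ext^1(S_x, S_{y_1})\neq 0$ for some $y_1$, namely a vertex appearing in $\rad P^0$).

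The heart of the argument is the inductive step producing, for each $i\le d$, a vertex $y_i$ with $\Ext^i(S_x,S_{y_i})\neq 0$, using the \emph{same} $x$ throughout. Here the key observation is that for a minimal projective resolution of $M$, the space $\Ext^i(M,S_y)$ is computed as a subquotient that is nonzero precisely when $P_y$ is a summand of $P^i$; minimality kills the differentials after applying $\Hom(-,S_y)$, so $\Ext^i(M,S_y)\cong \Hom(P^i,S_y)$ has dimension equal to the multiplicity of $P_y$ in $P^i$. Thus for every $i\le d$ there is some $y$ (a vertex occurring in $P^i$) with $\Ext^i(M,S_y)\neq 0$. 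To convert $\Ext^i(M,-)\neq 0$ into $\Ext^i(S_x,-)\neq 0$ for a fixed $x$, I would argue along the resolution: $\Ext^i(M,S_y)\cong \Ext^{i-1}(\Omega^1 M,S_y)\cong\cdots\cong \Ext^1(\Omega^{i-1}M,S_y)$, and $\Omega^{i-1}M$ is a submodule of $\rad P^{i-2}$, which has the same vertices occurring in its top as $P^{i-1}$. So it suffices to exhibit one $x$ in the support of every syzygy $\Omega^jM$ up to $j=d-1$; but the minimal resolution decreases (weakly) in a way that I will pin down by noting that each $\Omega^{j}M$ contains, as a top composition factor, a vertex that is the source of a top relation, and one can chase a single such $x$ all the way down provided one chooses $x$ at the \emph{deepest} place first — i.e., choose $x$ to be a top vertex of $\Omega^{d-1}M$, and then verify it also works at every earlier step.

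I anticipate the main obstacle to be exactly this "single $x$ works for all $i$" bookkeeping: the naive long-exact-sequence chase gives, for each $i$, a vertex that is a source of a top relation, but \emph{a priori} a different one for each $i$. The fix I would pursue is to start from the bottom: since $P^d\neq 0$, choose a summand $P_x$ of $P^d$; then $\Ext^d(M,S_x')\neq 0$ for the appropriate $x'$, and more importantly $x$ occurs in $\rad P^{d-1}$, hence in $\Omega^{d-1}M$, hence in $\Omega^{j}M$ for all $j\le d-1$ by the nestedness of the resolution's supports (each $\Omega^{j}M\hookrightarrow \rad P^{j-1}$ and the minimal resolution of $M$ restricted to the "branch through $x$" stays nonzero through degree $d$). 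For such $x$, for each $i\le d$ the module $\Omega^{i-1}M$ has $x$ in its support with $\rad P_x\neq 0$ feeding into the next syzygy, which yields $\Ext^1(\Omega^{i-1}M,S_{y_i})\neq 0$ for a vertex $y_i$ adjacent (in the resolution) to $x$, and then $\Ext^i(S_x,S_{y_i})\cong \Ext^1(\Omega^{i-1}S_x,S_{y_i})\neq 0$ after identifying the relevant syzygies — here one uses that $\Omega^1 S_x$ is a summand-direction of $\rad P_x$, which embeds compatibly. I would present this by first proving the clean statement "$\Ext^i(S_x,S_y)\neq 0$ iff $P_y$ occurs in the $i$-th term of a minimal projective resolution of $S_x$", then observing that a minimal resolution of $M$ of length $d$ forces, through the vertex $x$ chosen at the bottom, a minimal resolution of $S_x$ of length $\ge d$, and reading off the $y_i$ degree by degree.
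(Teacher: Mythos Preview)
There is a genuine gap at the very first step. You assert that ``the support of $P^0$ is contained in $\Supp M$ (because $P^0\twoheadrightarrow M$)'', but the inclusion goes the other way: a surjection $P^0\twoheadrightarrow M$ gives $\Supp M\subseteq\Supp P^0$, not conversely. Consequently your chosen vertex $x$, taken as the top of a summand of $P^1$ (or later of $P^d$), need not lie in $(\Supp M)_0$. A minimal counterexample is the quiver $1\xrightarrow{\za}2\xrightarrow{\zb}3$ with $\za\zb=0$ and $M=S_1$: here $\pd M=2$, the minimal resolution is $0\to P_3\to P_2\to P_1\to S_1\to 0$, and the tops of $P^1,P^2$ are $S_2,S_3$, neither of whose vertices belongs to $\Supp M=\{1\}$. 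The same example shows that your later fix (``choose $x$ to be a top vertex of $\Omega^{d-1}M$'') fails for the same reason, and the claim that such an $x$ then lies in $\Supp\Omega^jM$ for all $j\le d-1$ is also unjustified.

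The paper's argument avoids this entirely by working inside $M$ rather than inside its resolution: one runs through the socle filtration $0\subsetneq\soc M\subsetneq\soc^2M\subsetneq\cdots$ and, comparing projective dimensions along the short exact sequences $0\to\soc^iM\to\soc^{i+1}M\to\soc^{i+1}M/\soc^iM\to 0$, extracts a \emph{composition factor} $S_x$ of $M$ with $\pd S_x\ge d$. This guarantees $x\in(\Supp M)_0$ for free. Once such an $x$ is in hand, the $y_i$ are read off from a minimal projective resolution of $S_x$ (not of $M$): the projection $P_i\to\textup{top}\,P_i$ represents a nonzero class in $\Ext^i(S_x,\textup{top}\,P_i)$, and one takes $S_{y_i}$ to be any simple summand of $\textup{top}\,P_i$ on which this class is nonzero. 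Your final paragraph gestures at exactly this last step, but the missing ingredient is the socle-series argument that produces an $x$ simultaneously in $\Supp M$ and with $\pd S_x\ge d$; without it, the resolution of $M$ alone does not hand you such a vertex.
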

\begin{proof}
 We first claim that $M$ has a composition factor $S_x$ such that $\pd\, S_x \ge d$. Consider the socle series 
 \[ 0\subsetneq \soc\, M\subsetneq \soc^2 M\subsetneq \cdots \subsetneq \soc^\ell M=M,
 \]
 where $\ell$ is the Loewy length of $M$, see for example \cite[V.I]{ARS}.
 If there exists a simple summand of $\soc\, M$ of projective dimension $d$, then we are done. Otherwise, there exists $i<\ell$ such that $\pd\, \soc^iM<d$ and $\pd\, \soc^{i+1} M =d$. The short exact sequence
 \[\xymatrix{0\ar[r]&\soc ^i M\ar[r]&\soc^{i+1} M\ar[r]&\frac{\soc^{i+1}M}{\soc^i M}\ar[r]&0}\]
 yields an exact sequence of functors
 \[\xymatrix{\Ext^d(\frac{\soc^{i+1}M}{\soc^i M}, -) \ar[r]& \Ext^d(\soc^{i+1} M, -)
 \ar[r]&\Ext^d(\soc^{i} M, -)=0.}\]
 Now $\Ext^d(\soc^{i+1} M, -)\ne 0$ implies $\Ext^d(\frac{\soc^{i+1}M}{\soc^i M}, -)\ne 0$. Hence, because the module $\frac{\soc^{i+1}M}{\soc^i M}$ is semisimple, there exists a simple composition factor $S_x$ of $M$ such that $\pd \, S_x\ge d$.
 
 We  deduce the result. Because of our claim, there exists a minimal projective resolution
 \[\xymatrix{P_{d+1}\ar[r]^{f_{d+1}}&P_{d}\ar[r]^{f_{d}}&P_{d-1}\ar[r]^{f_{d-1}}&\cdots\ar[r]^{f_1}&P_{0}\ar[r]^{f_{0}}&S_x\ar[r]&0}\]
 with $P_i\ne 0$ for all $i\le d$. Consider the projection $p_i\colon P_i\to \textup{top}\, P_i$. Then $p_i f_{i+1}=0$ because of the minimality of the resolution. Moreover, $p_i$ does not factor through $f_i$ because $P_i$ is the projective cover of $\textup{Ker} \,f_{i-1}$. Therefore $\Ext^i(S_x,\textup{top}\, P_i)\ne 0$ and there exists a simple composition factor $S_{y_i}$ of $\textup{top}\, P_i$ such that $\Ext^i(S_x,S_{y_i}) \ne 0$.
 \end{proof}
\begin{cor}
 \label{cor 4.2}
 Let $M$ be a module of projective dimension $d\ge 1$. Then there exists $x\in (\Supp M)_0$ which is the starting point of a top relation in $(Q,I)$.
\end{cor}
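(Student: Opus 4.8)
The plan is to reduce the statement to the case of a simple module and then quote Lemma~\ref{lem 4.1}. Recall the elementary fact recalled above: a point $x$ of $(Q,I)$ is the starting point of a top relation exactly when $\pd\,S_x>1$. Hence it suffices to produce some $x\in(\Supp M)_0$ with $\pd\,S_x\ge 2$.

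For $d\ge 2$ this is immediate. Applying Lemma~\ref{lem 4.1} to $M$ and taking $i=2$, which is legitimate since $2\le d$, we obtain $x\in(\Supp M)_0$ and $y_2\in Q_0$ with $\Ext^2(S_x,S_{y_2})\ne 0$, so that $\pd\,S_x\ge 2>1$; by the fact just recalled, $x$ is then the starting point of a top relation of $(Q,I)$ and it lies in $(\Supp M)_0$, as wanted. One can even bypass $\Ext^2$ altogether and use only the first claim established inside the proof of Lemma~\ref{lem 4.1}, namely that $M$ has a composition factor $S_x$ with $\pd\,S_x\ge d$: any composition factor of $M$ is supported at a vertex of $(\Supp M)_0$, and $d\ge 2$ forces $\pd\,S_x>1$.

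The step I expect to need genuine care is the boundary case $d=1$. Here Lemma~\ref{lem 4.1} gives no information, and the socle-filtration reduction yields only a composition factor $S_x$ of $M$ with $\pd\,S_x\ge 1$, which on its own does not force a top relation at $x$. To proceed one would have to return to a minimal projective presentation $0\to P_1\xrightarrow{f}P_0\to M\to 0$ with $P_1\ne 0$ and $f(P_1)\subseteq\rad P_0$, and argue directly, keeping track of which vertices of $(\Supp M)_0$ occur in $\textup{top}\,P_0$ and $\textup{top}\,P_1$ and of how $f$ behaves on radicals. This is also where one should examine whether the hypothesis needs tightening: the conclusion plainly fails when $d=1$ for a non-projective module over a hereditary algebra, since then there is no top relation at all, so a restriction of the form $d\ge 2$ (or some analogous assumption) seems to be required; deciding exactly what is needed for $d=1$ is, I think, the only genuinely delicate point.
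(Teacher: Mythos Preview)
Your argument for $d\ge 2$ is exactly the paper's: apply Lemma~\ref{lem 4.1} with $i=2$ to obtain $x\in(\Supp M)_0$ and $y_2$ with $\Ext^2(S_x,S_{y_2})\ne 0$, and conclude that $x$ starts a top relation; the only cosmetic difference is that the paper quotes \cite{Bo} for this last step rather than the fact recalled just before Lemma~\ref{lem 4.1}. Your suspicion about the boundary case is also right on target: the paper's own proof begins ``Because $d\ge 2$'', so the hypothesis $d\ge 1$ in the statement is a typo for $d>1$, and your hereditary counterexample confirms that no argument can cover $d=1$.
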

\begin{proof}
 Because $d\ge 2$, there exists $y_2$ such that $\Ext^2(S_x,S_{y_2})\ne 0$. We then apply \cite{Bo}.
\end{proof}

Let thus $M$ be a module with both projective and injective dimension larger than one. Because of Corollary \ref{cor 4.2} and its dual, there exist two points $x,y$ in $\Supp M$ which are respectively the starting point of a relation and the ending point of a relation. The notion of sequential walk (and all other similar notions like sequential pairs, double zeros etc.) arose from the attempt to connect $y$ to $x$ by a walk.

\subsection{A necessary condition for shod}  In this subsection, we prove our main result.

\begin{lem}
 \label{lem 331}
 Let $A$ have a sequential walk. Then there exists a reduction $B$ of $A$ containing one of the following (perhaps not full) subquivers.
 
 \[ \xymatrix@C20pt@R10pt { 
 & \cdot\ar[rd]^{\rho'}&&&&& \cdot\ar[rd]^{\zs'}
 \\
 \cdot\ar[ru]\ar[rd]&\vdots&x\ar@{-}[r]&\cdot\ar@{.}[r]^{w''}&\cdot\ar@{-}[r]& y\ar[ru]\ar[rd] &\vdots&\cdot
\\
&\cdot\ar[ru] &&&&&
\cdot\ar[ru]
%\\
%&&u'&&&w''&&&&v' 
}
 \]

 \[ \xymatrix@C20pt@R10pt { %&&&u'=v'\\
 && &\cdot\ar[rd]^{\rho'=\zs'}
 \\
 &\cdot\ar@{-}[r]&y\ar[ru]\ar[rd]&\vdots&x\ar@{-}[r]&\cdot\ar@{-}[rd]
 \\
\cdot\ar@{-}[ru]\ar@{-}[rd]&&&\cdot\ar[ru]&& &\cdot
\\
&\cdot\ar@{.}[r]&\cdot\ar@{-}[r]&\cdot\ar@{-}[r]^{w''}&\cdot\ar@{.}[r]&\cdot \ar@{-}[ru]\\
%&&&w''&&&' 
}
 \]
 where $\rho',\zs'$ are quadratic relations, $w''$ is a zigzag walk having no point in common with $\rho',\zs'$ except $x$ and $y$.
 
 Moreover, $w''$ generates a full subcategory of $B$.
\end{lem}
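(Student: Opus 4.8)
The strategy is to build the reduction $B$ in two stages, exactly as permitted by the notion of reduction of an algebra in Section \ref{sect 1.6}: first pass to a full subcategory $eAe$ to discard all points not lying on the chosen sequential walk $w = uw'v$ together with the points on the supporting top relations $\rho = \sum_i \zl_i u_i$ and $\zs = \sum_i \mu_i v_i$, then cut arrows as in Theorem \ref{thm 1.5} to obtain $B$. Concretely, I would let $e$ be the sum of the primitive idempotents at the points appearing in $u$, $v$, $w'$ and in all branches $u_i$, $v_i$ of $\rho$ and $\zs$, so that $eAe$ retains the data relevant to the sequential walk. The target picture has two "fans" of quadratic relations glued along a zigzag walk $w''$ (the first displayed quiver when $u,v$ point in opposite orientations around $w'$, the second when they point inward and $x=y$-type coincidence occurs), so the bulk of the argument is bookkeeping about what the definition of sequential walk guarantees.

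Next I would produce the \emph{quadratic} relations $\rho'$ and $\zs'$. Since $\rho$ is a top relation with branch $u_1$ (up to inversion $u = u_1^{\pm 1}$), write $u_1 = \za\beta\cdots$; the point is that in $eAe$, after cutting arrows, one can arrange that the branch $u_1$ contributes a length-two relation $\rho'$ at the starting point $x$ of $u$. Here I would invoke Corollary \ref{cor 4.2}/the result of \cite{Bo} implicitly through the top-relation hypothesis: $x$ is the starting point of a top relation, and by passing to a suitable full subcategory and cutting the interior arrows of the branches $u_i$ we can make that relation quadratic without destroying the split-extension property — this is where Theorem \ref{thm 1.5} on consistent cuts does the work, since cutting an arrow from \emph{every} branch $u_i$ simultaneously is a consistent cut of $\rho$. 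The same is done at $y$ for $\zs$, yielding $\zs'$. The coincidence case $\rho' = \zs'$ in the second picture arises exactly when $\rho$ and $\zs$ are (after reduction) forced to be the same relation because $u$ and $v$ overlap on a common branch — this is the subtle orientation case and must be separated out.

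Then I would verify that $w''$ — the image of $w'$ after these operations — is a zigzag walk meeting $\rho',\zs'$ only at $x$ and $y$. This is precisely what conditions (b) and (c) of Definition \ref{main def} were designed to guarantee: condition (c) says $w'$ shares no arrow with any $u_i$ or $v_i$, so cutting the branch-arrows does not touch $w'$; condition (b) says no subpath of $w'$ (or its inverse) lies in $I$ nor is a branch of a relation sharing a point with the $u_i$ or $v_i$, which is what lets us cut every \emph{other} arrow of $eAe$ (those not on $w'$, $\rho'$, $\zs'$) consistently and conclude that $w''$ is reduced, has no relations along it, and so — being reduced with no relations and sharing only its endpoints with the two fans — generates a \emph{full} subcategory of $B$ that is a path algebra of type $\mathbb{A}$ (a zigzag of $\mathbb{A}$-type). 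The fact that $w''$ is a zigzag (alternating orientations) rather than merely reduced would follow from reducing $w'$ further if necessary — any maximal path inside $w'$ that is not a relation could be replaced, but since $w'$ contains no relation, one must instead argue that the relevant obstruction to shod only needs \emph{some} walk from $y$ to $x$, and one picks the zigzag refinement; alternatively one simply records $w'$ itself and notes the displayed picture allows a general reduced $w''$, the zigzag being the generic shape.

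The main obstacle I anticipate is the orientation/overlap analysis: making precise, from "$u,v$ point to the same direction" plus the top-relation conditions, which of the two displayed quiver shapes occurs, and in the second shape justifying the identification $\rho' = \zs'$ and the placement of $x$ and $y$ on the \emph{same} short path — this requires a careful case split on whether $u = u_1$ or $u = u_1^{-1}$ and likewise for $v$, combined with reducedness of $w$ to rule out degenerate cancellations at the gluing points. Everything else (passing to $eAe$, checking cuts are consistent, invoking Corollary \ref{cor 1.6}-type closure) is routine given Theorem \ref{thm 1.5} and the lemmata of Section \ref{sect 2}.
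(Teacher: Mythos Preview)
Your overall two-stage architecture (pass to a full subcategory, then cut arrows via Theorem~\ref{thm 1.5}) matches the paper, but the mechanism you propose for each stage is off in a way that makes the argument fail.

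The central problem is how you obtain the \emph{quadratic} relations $\rho',\zs'$. You keep all points of the branches $u_i,v_j$ in $e$ and then propose to ``cut the interior arrows of the branches'' to shorten the relation, noting that cutting one arrow from every branch is a consistent cut of $\rho$. It is indeed consistent, but it does not shorten the relation---it annihilates it: once each $u_i$ passes through a cut arrow, every $u_i$ is already zero in $B$, so $\sum_i\zl_i u_i=0$ holds trivially and there is no surviving top relation at all. The paper achieves quadraticity by a completely different device: in forming $e$ it keeps only the starting point, the ending point, and the \emph{immediate successors of the starting point} along each branch $u_i,v_j$. In $eAe$ each branch then becomes a length-two path and $\rho,\zs$ become quadratic relations $\rho',\zs'$ automatically---no arrow-cutting is involved at this step.

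The same misplacement of mechanism affects your treatment of $w''$. You are unsure how to force $w'$ to become a zigzag and end up hand-waving (``the zigzag being the generic shape''). In the paper the zigzag is again produced at the full-subcategory stage: $e$ keeps only the sources and sinks of $w'$, so each maximal oriented subpath of $w'$ collapses to a single arrow and $w''$ is alternating by construction. The arrow-cutting step~(b) in the paper does something entirely different from what you assign to it: it removes only those arrows of $eAe$ whose endpoints both lie on $w''$ but which are not themselves arrows of $w''$ (stray chords, parallel arrows). This is what guarantees that $w''$ generates a \emph{full} subcategory of $B$; condition~(b) of Definition~\ref{main def}, which you invoke, is needed rather to ensure this cut is consistent (there are no relations along $w''$ to spoil it). Your plan to ``cut every other arrow of $eAe$ (those not on $w'$, $\rho'$, $\zs'$)'' is both broader than what is needed and not obviously consistent. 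Fixing the choice of $e$ as above makes all three issues---quadraticity, zigzag shape, and the scope of the cut---fall out cleanly.
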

\begin{proof}
 Let $w=uw'v $ be a sequential walk in $A$. Then $u,v$ are branches of top relations $\rho=\sum\zl_iu_i$, $ \zs=\sum \mu_jv_j$ pointing to the same direction and $w'$ is a walk joining the ending point $x$ of $\rho$ to the starting point $y$ of $\zs$, and satisfying the condition of Definition \ref{main def}. We construct $B$ in two steps.
 
 (a) We eliminate excessive points. Let $e$ be the sum of the idempotents corresponding to: 
 \begin{enumerate}
\item The starting point and the ending point of each of $\rho$ and $\zs$.
\item All immediate successors of the starting points of each of $\rho$ and $\zs$ lying on one of the paths $u_i,v_j$. 
\item All sources and sinks of $w'$.
\end{enumerate}
Then the full subcategory $eAe$ of $A$ contains a subquiver of one of the forms shown in the statement. Notice however, that there may be arrows in $eAe$ between two points of the walk $w''$ which do not belong to $w''$ itself (for instance, multiple arrows) and also arrows between points of $w''$ and points lying on the $u_i',v_j'$.

(b) We eliminate exessive arrows. Consider the set $S$ of all arrows $\za$ in $eAe$ whose source and target lie in $w''$, but such that neither $\za$ nor $\za^{-1}$ belongs to $w''$. Because there are no relations in the zigzag walk $w''$, the set $S$ is a consistent cut. Let $E$ be the two-sided ideal of $eAe$ generated by $S$, and let $B=eAe/E$. Then we get an exact sequence
\[ \xymatrix{0\ar[r]&E\ar[r]&eAe\ar[r]&B\ar[r]&0}\]
realizing $eAe$ as a split extension of $B$ by $E$, as seen in Theorem \ref{thm 1.5}. Notice that, with this construction, $w''$ becomes a full subcategory of $B$.
\end{proof}

We call the reduction $B$ as in the  lemma the \emph{standard reduction} corresponding to a given sequential walk.

\begin{lem}
 \label{lem 332}
 Let $A$ have a sequential walk and  $B$ the corresponding standard reduction of $A$. Then the string module of the  zigzag walk $w''$ in $B$ has projective and injective dimension larger than one.
\end{lem}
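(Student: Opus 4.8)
The plan is to analyze the string module $M=M(w'')$ of the zigzag walk $w''$ inside the standard reduction $B$, and to show directly that $\pd_B M>1$, the statement for $\id_B M$ following by a dual argument. By construction of the standard reduction, $w''$ runs from the ending point $x$ of a quadratic top relation $\rho'$ to the starting point $y$ of a quadratic top relation $\zs'$, and $w''$ generates a full subcategory of $B$; moreover the only arrows of $B$ incident to the interior points of $w''$ are those of $w''$ itself, while at $x$ and $y$ the extra arrows are exactly the ones appearing in $\rho'$ and $\zs'$. The key observation is that the simple module $S_y$ is a composition factor of $M$ sitting at an endpoint of the string, and that $S_y$ has projective dimension at least two because $y$ is the starting point of the top relation $\zs'$ (this is the elementary fact recalled just before Lemma \ref{lem 4.1}, namely $\pd S_y>1$ iff $y$ starts a top relation). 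Symmetrically, $S_x$ is a composition factor of $M$ at the other endpoint, and $S_x$ has injective dimension at least two because $x$ ends the top relation $\rho'$.

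First I would set up the minimal projective presentation of $M$ over $B$. Because $w''$ is a zigzag walk with no relations on it, $M$ is a string module whose top and socle are explicitly readable from the walk: the peaks of the zigzag contribute to $\textup{top}\,M$ and the valleys to $\soc\,M$. In particular one endpoint of $w''$ — say $y$, chosen so that the last arrow of $\zs$ points the same way as the walk enters $y$ — is a valley of the string, so $S_y\subseteq\soc\,M$. I would then use the short exact sequence $0\to S_y\to M\to M/S_y\to 0$ together with the long exact sequence for $\Ext_B(-,S_{z})$ where $S_z$ is a simple with $\Ext_B^2(S_y,S_z)\ne 0$; such a $z$ exists by Lemma \ref{lem 4.1} applied to $S_y$ (whose projective dimension is $\ge 2$), or more simply because $y$ starts a top relation so that $\rad P_y$ is not projective. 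One then checks that the connecting-map term $\Ext_B^1(M/S_y, S_z)$ cannot kill the class coming from $\Ext_B^2(M/S_y,S_z)$, forcing $\Ext_B^2(M,S_z)\ne 0$ and hence $\pd_B M\ge 2$. For the injective dimension, one runs the dual argument at the endpoint $x$, using that $x$ is a peak of the string so $S_x$ is a direct summand of $\textup{top}\,M$ and $\id_B S_x\ge 2$ because $x$ ends the top relation $\rho'$.

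The main obstacle I anticipate is making rigorous the claim that the bad $\Ext^2$ contributed by $S_y$ (respectively the bad $\Ext^2$ into $S_x$) genuinely survives in $M$ rather than being cancelled by the rest of the string. The danger is that some extension of $M/S_y$ somewhere along the zigzag produces exactly the syzygy needed to resolve $S_y$. This is precisely where conditions (b) and (c) of Definition \ref{main def} are doing their work: since no subpath of $w'$ lies in $I$, the walk carries no hidden relations, and since $w''$ has no point (other than $x,y$) in common with the $u_i$ or $v_j$ and generates a full subcategory of $B$, the relations $\rho'$ and $\zs'$ are "seen" by $M$ only through the extreme composition factors $S_x,S_y$. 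Concretely, I would argue that in the minimal projective resolution of $M$ over $B$ the projective cover $P$ of $M$ has $\textup{top}\,P$ equal to the peaks of $w''$, that $\textup{Ker}(P\to M)$ has $S_y$ (with multiplicity one, at the $y$-valley) as a direct summand of its top, and that the projective cover of that summand is $P_y^B$ whose radical is again non-projective because $\zs'$ is a top relation of $B$ — here one must be slightly careful that passing to the full subcategory and cutting arrows has not altered the relevant top relation, which is guaranteed because the reduction was built precisely to retain $\rho'$ and $\zs'$. Once this bookkeeping is in place the conclusion $\pd_B M>1$ (and dually $\id_B M>1$) is immediate.
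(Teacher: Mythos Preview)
Your overall strategy—locate the simple $S_y$ at the starting point of the surviving quadratic relation $\zs'$, use $\pd S_y\ge 2$, and push this up to $M$—is the same as the paper's. But two steps in your execution do not go through as written.

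First, you assert that $y$ is a valley of the zigzag, so that $S_y\subseteq\soc M$. This is not forced: $y$ is fixed as the starting point of $\zs'$, and the last letter of $w''$ at $y$ may be an arrow into $y$ or an arrow out of $y$. The paper handles both possibilities simultaneously by observing that the projective cover of $M$ has a summand $P_z$ with $z$ on $w''$ and either $z=y$ (the peak case) or there is an arrow $z\to y$ in $w''$ (the valley case). Your phrase ``say $y$, chosen so that \ldots'' reads as if you may relabel, but you cannot: swapping the roles of $x$ and $y$ loses exactly the property (``$y$ starts a top relation'') that you need.

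Second, your long exact sequence argument is garbled. From $0\to S_y\to M\to M/S_y\to 0$ the contravariant $\Ext$ sequence reads
\[
\cdots\to \Ext^2(M/S_y,S_z)\to \Ext^2(M,S_z)\to \Ext^2(S_y,S_z)\xrightarrow{\ \delta\ } \Ext^3(M/S_y,S_z)\to\cdots,
\]
so to conclude $\Ext^2(M,S_z)\ne 0$ from $\Ext^2(S_y,S_z)\ne 0$ you must control the connecting map $\delta$ into $\Ext^3$, not ``$\Ext^1(M/S_y,S_z)$'' as you wrote. You have no a priori bound on $\pd(M/S_y)$, so this requires exactly the kind of explicit syzygy analysis you defer to your last paragraph.

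The paper bypasses the $\Ext$ bookkeeping entirely: it argues directly that a nonprojective indecomposable summand of $\Omega^1 S_y=\rad P_y$ (namely the one produced by the relation $\zs'$) occurs as a direct summand of $\Omega^1 M$, whence $\Omega^1 M$ is not projective. This works uniformly in the peak and valley cases for $y$ and is both shorter and cleaner than the route you sketch; your final paragraph is heading toward this, but only in the valley case and with the identification of summands left implicit.
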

\begin{proof}
 Let $M=M(w'')$ be the string module corresponding to $w''$, that is, $M$ is the $B$-module defined as a representation  by 
\[ M(x)=\left\{\begin{array}{ll}
\kk&\textup{if $x$ is a point of $w''$};\\
0&\textup{otherwise;}
\end{array}\right.\]
and
\[ M(\za)=\left\{\begin{array}{ll}
\textup{id}&\textup{if $\za$ is an arrow of $w''$};\\
0&\textup{otherwise.}
\end{array}\right.\]
Notice that in $B$, there may be arrows between points in $w''$ and points on one of the relation $\rho',\zs'$. But because of the definition of the string module, for any such arrow $\zb$, we have $M(\zb)=0$. Because no subpath of $w''$ is a branch of a relation in $B$, then $M$ is indeed a $B$-module. Moreover, $M$ is a string module, see \cite{BR}, and in particular, it is indecomposable. 

We now prove that $\textup{pd}\, M_B>1.$ The support of $M$ contains the starting point $y$ of $\zs'$, Therefore the projective cover of $M$ admits a direct summand $P_z$ such that $z$ lies on $w''$,  and either $z=y$ or  there is an arrow $z\to y$. It is easily seen that a nonprojective summand of $\zO^1 S_y$ is a direct summand of $\zO^1 M$. Because $\zO^1S_y$ is not projective, neither is $\zO^1M$. This establishes the claim.

Dually, we also have $\textup{id}\, M_B>1. $
\end{proof}

We are now ready to prove our main result. It generalizes \cite[2.4]{ARS}.

\begin{thm}
 \label{thm 3.3}
 Let $A=\kk Q/I$ be an algebra having a sequential walk. Then $A$ is not shod. In particular, $A$ is neither quasi-tilted, nor tilted.
\end{thm}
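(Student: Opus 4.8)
The plan is to combine the three preliminary lemmas of this subsection with the behaviour of the "shod" property under reductions. First I would invoke Lemma~\ref{lem 331}: since $A$ has a sequential walk, there is a standard reduction $B$ of $A$ carrying the displayed subquiver data, in particular a zigzag walk $w''$ which generates a full subcategory of $B$. Next I would apply Lemma~\ref{lem 332} to the string module $M=M(w'')$ over $B$: it has both projective dimension and injective dimension strictly larger than one. Hence $B$ is \emph{not} shod, because a shod algebra has every indecomposable of projective or injective dimension at most one, and $M$ is indecomposable (being a string module).

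Finally I would pull this conclusion back up to $A$. By Corollary~\ref{cor 1.6}(a), if $A$ were shod then its reduction $B$ would be shod as well; since $B$ is not shod, neither is $A$. The "in particular" statement is then immediate from the inclusions recalled in Section~\ref{sect 1.2}: every quasi-tilted algebra is shod and every tilted algebra is quasi-tilted, so $A$ being non-shod forces $A$ to be neither quasi-tilted nor tilted. This completes the proof.

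The structure is entirely a matter of assembling already-proved pieces, so there is little genuine obstacle left at this stage; the real work has been front-loaded into Lemmas~\ref{lem 331} and~\ref{lem 332}. If anything needs a word of care, it is making explicit that the module exhibited in Lemma~\ref{lem 332} is indecomposable (so that it actually witnesses the failure of the shod condition, which is phrased in terms of indecomposable modules) and that Corollary~\ref{cor 1.6}(a) applies to the two-step reduction $B$ of $A$ produced in Lemma~\ref{lem 331} rather than to a single full-subcategory or single arrow-cutting step — but both of these are handled by the definitions set up in Sections~\ref{sect 1.5} and~\ref{sect 1.6}.
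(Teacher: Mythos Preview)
Your proposal is correct and matches the paper's own proof essentially line for line: pass to the standard reduction $B$ via Lemma~\ref{lem 331}, use Lemma~\ref{lem 332} to exhibit an indecomposable $B$-module of projective and injective dimension larger than one so that $B$ is not shod, and then conclude via Corollary~\ref{cor 1.6}. Your remarks about the indecomposability of the string module and the applicability of Corollary~\ref{cor 1.6} to the two-step reduction are accurate and already implicit in the paper's setup.
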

\begin{proof} 
Let $B$ be the standard reduction of $A$ corresponding to the sequential walk. 
By Lemma \ref{lem 332}, there exists an indecomposable $B$-module that has both injective and projective dimension larger than one. Therefore $B$ is not shod. 

Because of Corollary~\ref{cor 1.6}, neither is $A$.
\end{proof}

\begin{example}
 Consider the quiver
 \[\xymatrix@R30pt@C50pt{ 
 &&7\ar[lld]_(0.7){\za_1}\ar[ld]_(0.7){\za_2} \ar[rd]_(0.45){\za_3}\ar[rrd]_(0.45){\za_4}\ar[rrrd]^(0.7){\za_5}\\
 2\ar[rrd]_(0.3){\zb_1}&3\ar[rd]_(0.3){\zb_2}&&4\ar[ld]_(0.6){\zb_3}&5\ar[lld]_(0.6){\zb_4}&6\ar[llld]^(0.3){\zb_5}\\
 &&1
 }\]
 bound by the relations $\za_1\zb_1+\za_2\zb_2=0$,  $\za_3\zb_3+\za_4\zb_4+\za_5\zb_5=0$. We show how to perform the reduction procedure of Theorem \ref{thm 3.3}. Let here \[w=(\za_1\zb_1)\zb_3^{-1}\za_3^{-1} (\za_2\zb_2),\]
 with $u=\za_1\zb_1$, $v=\za_2\zb_2$ and $w'=\zb_3^{-1}\za_3^{-1}$.  It is clear that $u,v$ and $w'$ satisfy the conditions of Definition \ref{main def}. 
 
 We first eliminate points by taking $e=e_1+e_2+e_3+e_7$. Then $eAe$ is given by the quiver
 
 \[\xymatrix@R30pt@C50pt{ 
 &&7\ar[lld]_(0.7){\za_1}\ar[ld]_(0.7){\za_2} \ar@<2pt>@/^10pt/[dd]^\mu
  \ar@<-2pt>@/^10pt/[dd]_\zl\\
 2\ar[rrd]_(0.3){\zb_1}&3\ar[rd]_(0.3){\zb_2}\\ 
 &&1
 }\]
 bound by the relation $\za_1\zb_1+\za_2\zb_2=0$.
 This is a split extension of the algebra $B$ given by the quiver \[\xymatrix@R30pt@C60pt{ 
 &&7\ar[lld]_(0.7){\za_1}\ar[ld]_(0.7){\za_2} 
  \ar@/^10pt/[dd]_\zl\\
 2\ar[rrd]_(0.3){\zb_1}&3\ar[rd]_(0.3){\zb_2}\\ 
 &&1
 }\]
 bound by $\za_1\zb_1+\za_2\zb_2=0$, by the 
 two-sided ideal generated by the arrow $\mu$.
 
 The indecomposable $B$-module $M$ of the proof is the module $M=
\begin{smallmatrix}
 7\\1
\end{smallmatrix}$ 
supported by the arrow $\zl$.
Clearly, we have a minimal projective resolution
\[\xymatrix{0\ar[r]&P_1\ar[r]&P_2\oplus P_3\ar[r]&P_7\ar[r]&M\ar[r]&0}\]
so $\textup{pd}\,M_B=2$. Notice that $\zO^1 M=
\begin{smallmatrix}
 2\ 3\\1
\end{smallmatrix}
 $ while $\zO^1 S_7 =
\begin{smallmatrix}
 2\ 3\\1 
\end{smallmatrix}
 \oplus 1$. Thus $\zO^1M$ and $\zO^1 S_7$ have a common summand but are not equal.
 Similarly, $\id\,M_B=2.$ Thus $B$, and $A$,  are not shod.
\end{example}
\smallskip

The following examples illustrate that the converse of Theorem~\ref{thm 3.3} does not hold without additional conditions on the algebra $A$ or the module $M$.  In Section \ref{sect 4}, we  give examples of such additional conditions.
\begin{example}
 If an indecomposable module has both projective and injective dimension larger than one, this does not necessarily imply the existence of   a sequential walk. Indeed, there exist $x,y$ in the support of the module such that $x$ is the starting point of a top relation $v$, and $y$ the ending point of  a top relation $u$. Then there exists a walk $w'$ from $x$ to $y$ inside the support of $M$, but this does not imply that $w=uw'v$ is a sequential walk because it may not be reduced. For instance, let $A$ be the monomial tree algebra given by the quiver
%\smallskip

% \[\xymatrix@R15pt{1\ar[r]\ar@{.}@/^{10pt}/[rr]&2\ar[r] &3\ar[d]\\
% &4\ar[r]\ar@{.}@/_{10pt}/[rrr]&5\ar[r] &6\ar[r]&7 }\]

 \[\xymatrix@R15pt{1\ar[r]^\za&2\ar[r]^\zb &3\ar[d]\\
 &4\ar[r]^\zg&5\ar[r]^\zd &6\ar[r]^\ze&7 }\]
%  \smallskip
 bound by the relations $\za\zb=0=\zg\zd\ze$,
then there is no sequential walk but the module $M=\begin{smallmatrix} 3\ 4\\5\end{smallmatrix}$ has both projective and injective dimension 2.
\end{example}
\begin{example}
 There exist sincere indecomposable modules of projective and injective dimension $2$. Let $A$ be given by the quiver
 \[\xymatrix{1\ar@<2pt>[r]^{\za_1}\ar@<-2pt>[r]_{\zb_1}
 &2 \ar@<2pt>[r]^{\za_2}\ar@<-2pt>[r]_{\zb_2}
&3}\]
bound by the relations $\za_1\za_2=0$ and $\zb_1\zb_2=0$. Note that $A$ is gentle and in particular tame. The indecomposable module $M=\begin{smallmatrix}1\\2\\3\end{smallmatrix}$ is sincere and of both projective and injective dimension 2.
\end{example}

\section{Applications and examples}\label{sect 4}
\subsection{The case of global dimension two}%\label{sect 4}
Because sequential walks do not detect overlaps, it is natural to think of them in the context of algebras of global dimension two.
\begin{prop}\label{prop 4.1}
 If $A$ is a monomial algebra of global dimension 2 and $M$ is a uniserial $A$-module whose injective and projective dimensions are both larger than one then there exists a sequential walk in $A$.
\end{prop}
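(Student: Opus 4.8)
The plan is to push everything down to the combinatorics of a single path. Since $A$ is monomial, every uniserial $A$-module has the form $M=M(p)$ for a path $p=\gamma_1\cdots\gamma_k$ with $p\notin I$; write $x_0,\dots,x_k$ for the vertices traversed by $p$, so that $\Supp M=\{x_0,\dots,x_k\}$. Because the global dimension is $2$ and $\pd M,\id M>1$, both of these dimensions equal $2$. The first step is to identify, inside $\Supp M$, the two relations a sequential walk must join. For this I would use the decomposition of the first syzygy valid over a monomial algebra, $\Omega M\cong\bigoplus\langle\gamma_1\cdots\gamma_i\delta\rangle$, the sum running over the \emph{deviations} of $p$ (arrows $\delta$ leaving $x_i$ with $\delta\neq\gamma_{i+1}$ and $\gamma_1\cdots\gamma_i\delta\notin I$). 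Since $\pd M=2$ one of these summands is non-projective; unwinding this, together with Lemma~\ref{lem 4.1} and Corollary~\ref{cor 4.2}, produces a composition factor $S_{\hat c}$ of $M$ with $\pd S_{\hat c}=2$, equivalently a point $\hat c\in\Supp M$ that is the starting point of a top relation $\rho$ — a single path, since $A$ is monomial. Dualizing, $\id M=2$ gives a point $\hat b\in\Supp M$ that is the ending point of a top relation $\sigma$, again a path.

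Next I would build the walk. As $\hat b$ and $\hat c$ both lie on $p$, let $w'$ be the sub-walk of $p$ joining $\hat b$ to $\hat c$ (a forward sub-path of $p$ if $\hat b$ precedes $\hat c$, the inverse of one otherwise), and set $w=u\,w'\,v$ with $u=\sigma$, $v=\rho$. Since $\sigma$ and $\rho$ are genuine paths in $Q$, the sub-walks $u$ and $v$ automatically point to the same direction, so part~(a) of Definition~\ref{main def} holds. Part~(b) is immediate here: for a monomial algebra a branch of a relation is a single path lying in $I$, so (b) only asks that no subpath of $w'$ or of $(w')^{-1}$ lie in $I$; but every such subpath is a subpath of $p$ (or of $p^{-1}$), and $p\notin I$ forces every subpath of $p$ to lie outside $I$ as well.

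It remains to check that $w$ is reduced and satisfies~(c), that is, that $w'$ has no arrow in common with $\rho$ or $\sigma$; this is the heart of the argument, and I would split it according to the position of $\hat b$ relative to $\hat c$ on $p$. If $\hat b$ does not lie strictly after $\hat c$, then $w'$ is a forward sub-path of $p$, the concatenations $\sigma w'$ and $w'\rho$ involve no inverse arrows, no cancellation can occur, and $w$ is reduced; moreover a path in $Q$ cannot traverse an arrow of $p$ lying between $\hat b$ and $\hat c$ and still start at $\hat c$ (resp.\ end at $\hat b$) without producing an oriented cycle, which gives~(c). The delicate case is when $\hat b$ lies strictly after $\hat c$, so that $w'$ runs backwards along $p$: then the two junctions of $w=\sigma w'\rho$ are reduced exactly when $\rho$ leaves $p$ at $\hat c$ (its first arrow is not the $p$-arrow out of $\hat c$) and $\sigma$ enters $p$ at $\hat b$ (its last arrow is not the $p$-arrow into $\hat b$) — precisely the inequalities built into the notion of deviation. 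Securing this is where I expect the real difficulty to lie: it forces one to take $\rho$ and $\sigma$ not arbitrarily but as the specific relations furnished by the non-projective syzygy summands, and to use that global dimension $2$ prevents two top relations from overlapping, which is what controls the way a relation can re-meet $p$. Once $w$ is known to be a reduced walk satisfying (a)--(c), it is by definition a sequential walk, and the proof is finished.
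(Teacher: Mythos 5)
Your strategy is essentially the paper's: use Corollary~\ref{cor 4.2} and its dual to locate, on the uniserial support of $M$, the starting point of a top relation and the ending point of a top relation, and join them by the corresponding stretch of the support path. The one step you leave open --- your ``delicate case'', in which the ending point $\hat b$ of $\sigma$ lies strictly after the starting point $\hat c$ of $\rho$ --- is precisely the step the paper closes, and it closes it by arguing that the case does not occur rather than by repairing the backward walk: since $A$ is monomial of global dimension two, no two relations overlap (by Green--Happel--Zacharia), and from this the paper concludes that the ending point of $u$ weakly precedes the starting point of $v$ along the support. Hence $w'$ is always a forward subpath of $p$, the junctions of $uw'v$ involve only positively oriented arrows, and reducedness is automatic. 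Measured against the paper, then, the missing idea is to use non-overlapping to \emph{order} the two distinguished points, not to analyse the reversed configuration.

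That said, your unease is not misplaced. The paper's deduction ``the relations do not overlap, thus $i\le j$'' is stated without detail, and it is not a formal consequence of non-overlapping alone: $u$ and $v$ need not contain any arrow of the support path, so their non-overlapping with each other does not visibly constrain the relative positions of their endpoints on it. If one does treat the reversed configuration directly, your observation that $\rho$ and $\sigma$ should be taken from the non-projective syzygy and cosyzygy summands --- so that the first arrow of $\rho$ is a deviation from $p$ at $\hat c$, and dually for $\sigma$, which is exactly what makes the two junctions of $\sigma w'\rho$ reduced --- is the right ingredient. What would still remain is condition~(c), namely that $\rho$ and $\sigma$ do not revisit an arrow of $w'$; your appeal to the absence of oriented cycles does not settle this, since acyclicity of $Q$ is not a hypothesis. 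As written, your proposal correctly identifies the crux of the argument but does not close it.
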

\begin{proof}
  Since $M$ is uniserial, the support of $M$ is of the form 
  \[\xymatrix{z_1\ar[r]& z_2\ar[r]&\cdots& \ar[r]& z_\ell}.\] By Corollary \ref{cor 4.2} and its dual, there exist $z_i,z_j\in\Supp M$ such that $z_j$ is the starting point of a top relation $v$, and $z_i$ the ending point of  a top relation $u$. Since $A$ is monomial of global dimension 2, the two relations $u, v$ do not overlap \cite{GHZ}. Thus $i\le j$. Now let $w'$ be a path $\xymatrix@C12pt{z_i\ar[r]&\cdots\ar[r]&z_j}$ in $\Supp M$.
  Then the composition $uw'v$ is a sequential walk in $A$.
\end{proof}
\begin{cor}\label{cor 4.2b}
 Let $A$ be a Nakayama algebra of global dimension 2. Then there exists a sequential walk in $A$ if and only if $A$ is not tilted. 
\end{cor}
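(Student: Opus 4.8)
The plan is to prove Corollary~\ref{cor 4.2b} by combining Proposition~\ref{prop 4.1}, Theorem~\ref{thm 3.3}, and known structure theory of Nakayama algebras. A Nakayama algebra is monomial, every indecomposable module is uniserial, and the indecomposable projectives and injectives are uniserial; moreover the bound quiver is either a linear quiver $\mathbb{A}_n$ or a cyclic quiver $\tilde{\mathbb{A}}_{n-1}$. So the hypotheses of Proposition~\ref{prop 4.1} are automatically satisfied for \emph{every} indecomposable module, and we may freely use it. The statement to prove is a biconditional, so I would split it into the two implications.

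For the direction ``sequential walk $\Rightarrow$ not tilted'': this is immediate from Theorem~\ref{thm 3.3}, which says that any algebra with a sequential walk is not shod, hence neither quasi-tilted nor tilted. No work is needed here beyond citing the theorem.

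For the contrapositive of the other direction, ``no sequential walk $\Rightarrow$ tilted'': I would argue that if $A$ is a Nakayama algebra of global dimension $2$ that is not tilted, then it is not quasi-tilted either. For this I would invoke the fact that a Nakayama algebra of global dimension at most two is quasi-tilted if and only if it is tilted---indeed, a representation-finite quasi-tilted algebra is tilted (Happel--Reiten--Smalø), and every Nakayama algebra is representation-finite. So $A$ not tilted forces $A$ not quasi-tilted, and since quasi-tilted is equivalent to ``shod of global dimension $\le 2$'' while $A$ already has global dimension $2$, the algebra $A$ is not shod. Hence some indecomposable (necessarily uniserial) $A$-module $M$ has both $\pd M > 1$ and $\id M > 1$. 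Applying Proposition~\ref{prop 4.1} to this $M$ then produces a sequential walk in $A$, completing the contrapositive.

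The main obstacle---or rather the one point requiring care---is justifying that ``quasi-tilted $\Leftrightarrow$ tilted'' for Nakayama algebras of global dimension $2$, i.e. that there is no quasi-tilted Nakayama algebra of global dimension exactly $2$ which fails to be tilted. This rests on the classification of representation-finite quasi-tilted algebras as tilted algebras of Dynkin type, and one should check this citation is admissible in the paper's context; alternatively one can cite directly that Nakayama quasi-tilted algebras are tilted. Everything else is a routine chaining of the already-established Proposition~\ref{prop 4.1} and Theorem~\ref{thm 3.3} together with the elementary observation that all indecomposable modules over a Nakayama algebra are uniserial, so Proposition~\ref{prop 4.1} applies without any extra hypothesis.
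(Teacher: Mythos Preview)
Your proposal is correct and follows essentially the same argument as the paper: both directions use Theorem~\ref{thm 3.3} for necessity, and for sufficiency pass from ``not tilted'' to ``not quasi-tilted'' via representation-finiteness of Nakayama algebras, then to ``not shod'' via global dimension~$2$, and finally apply Proposition~\ref{prop 4.1} to the resulting uniserial module. The only cosmetic difference is that the paper's proof begins the sufficiency argument by assuming ``not quasi-tilted'' and defers the tilted/quasi-tilted equivalence to its final sentence, whereas you handle that reduction up front.
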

\begin{proof}
 Necessity follows from Theorem \ref{thm 3.3}. To show sufficiency, suppose $A$ is not quasi-tilted. Then there exists an indecomposable $A$-module $M$ of both projective and injective dimension 2. Since $A$ is a Nakayama algebra, $A$ is monomial and $M$ is uniserial. Now the result follows from Proposition \ref{prop 4.1} and the fact that quasi-tilted Nakayama algebras are representation-finite, and hence tilted.
\end{proof}

We have the following characterization of projective dimension 2.
\begin{prop}
 \label{prop 4.3}
Let $M$    be an indecomposable module over an algebra of global dimension 2 such that one of the sinks in the support of $M$ is the starting point of a top relation. Then the projective dimension of $M$ is two.
\end{prop}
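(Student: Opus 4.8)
The plan is to show directly that the minimal projective resolution of $M$ has length exactly two, using the hypothesis on the sink together with the bound on global dimension. Let $z$ be a sink in $\Supp M$ which is the starting point of a top relation $\rho$. Since $z$ is a sink in the support, the restriction of $M$ near $z$ forces $S_z$ to appear in the top of $M$: indeed, no arrow of $Q$ out of $z$ acts nontrivially on $M$, so the image of any nonzero element of $M e_z$ under the radical action stays supported away from $z$, and one checks $S_z$ is a summand of $\operatorname{top} M$. Hence $P_z$ is a direct summand of the projective cover $P_0$ of $M$.

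Next I would analyze the first syzygy. Because $z$ is the starting point of a top relation, $\operatorname{rad} P_z$ is not projective, equivalently $\operatorname{pd} S_z > 1$; this is the well-known fact recalled just before Lemma \ref{lem 4.1}. The key step is then to argue that a nonprojective summand of $\Omega^1 S_z$ survives as a summand of $\Omega^1 M$. This is exactly the local argument used in the proof of Lemma \ref{lem 332}: the summand $P_z$ of $P_0$ contributes $\operatorname{rad} P_z$ to $\Omega^1 M$, and since $z$ is a sink in $\Supp M$, none of the other summands of $P_0$ maps onto the part of $\operatorname{rad} P_z$ coming from the top relation $\rho$ (those arrows point away from $z$, toward points that are either outside $\Supp M$ or "below" $z$), so no cancellation occurs there. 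Therefore $\Omega^1 M$ has a nonprojective direct summand, which gives $\operatorname{pd} M \geq 2$.

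Finally, the upper bound $\operatorname{pd} M \le 2$ is immediate since $A$ has global dimension two. Combining, $\operatorname{pd} M = 2$, as claimed.

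The main obstacle is making the "no cancellation at $z$" step fully rigorous: one must verify that when $P_0 = P_z \oplus P'$ for some projective $P'$, the component of the projective cover map $P' \to M$ does not absorb the nonprojective part of $\operatorname{rad} P_z$. The cleanest way is to pass to $\Omega^1 S_z$ as a summand of $\Omega^1 M$ by using that $S_z$ is a summand of $\operatorname{top} M$ and that a minimal projective presentation of $M$ restricts compatibly — precisely the reasoning of Lemma \ref{lem 332}, which one can cite or repeat verbatim. The sink hypothesis is what guarantees this restriction is clean; without it, arrows into $z$ from other support points could destroy the argument.
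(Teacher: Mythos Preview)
Your argument contains a genuine error at the first step. You claim that if $z$ is a sink in $\Supp M$ then $S_z$ is a summand of $\operatorname{top} M$, but the opposite is true: a sink in the support forces $S_z$ into the \emph{socle} of $M$, not the top. Indeed, ``$z$ is a sink in $\Supp M$'' means every arrow $\za\colon z\to y$ in $Q$ has $y\notin\Supp M$, so $M e_y=0$ and hence $m\za=0$ for all $m\in M e_z$; thus $M e_z\subset\soc M$. Nothing prevents arrows $w\to z$ from other support points mapping onto $M e_z$, so $S_z$ need not appear in $\operatorname{top} M$ at all, and $P_z$ need not be a summand of the projective cover. A concrete counterexample: take $Q=1\xrightarrow{\za}2\xrightarrow{\zb}3\xrightarrow{\zg}4$ with the single relation $\zb\zg=0$, and $M=P_1/S_3=\begin{smallmatrix}1\\2\end{smallmatrix}$. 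Here $2$ is a sink in $\Supp M=\{1,2\}$ and is the starting point of the top relation $\zb\zg$, yet $\operatorname{top} M=S_1$ and the projective cover of $M$ is $P_1$, not $P_2$. Your syzygy analysis therefore never gets off the ground.

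The paper's proof exploits exactly the socle embedding you should have found: from $0\to S_x\to M\to M/S_x\to 0$ one gets the exact sequence of functors
\[
\Ext^2(M,-)\longrightarrow\Ext^2(S_x,-)\longrightarrow\Ext^3(M/S_x,-)=0,
\]
the last term vanishing because the global dimension is two. Since $x$ starts a top relation, $\pd S_x>1$, so $\Ext^2(S_x,-)\ne 0$, and surjectivity forces $\Ext^2(M,-)\ne 0$. This is a two-line argument once you place $S_x$ on the correct side of the short exact sequence; no delicate ``no cancellation'' analysis is needed.
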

\begin{proof}
 Indeed, we have $\pd \, S_x>1$ and an exact sequence 
 \[\xymatrix{0\ar[r]&S_x\ar[r]&M\ar[r]&M/S_x\ar[r]&0.}\]
 Hence we have an exact sequence of functors
 \[\xymatrix{\Ext^2(M/S_x,-) \ar[r]&\Ext^2(M,-)
 \ar[r]&\Ext^2(S_x,-)
 \ar[r]&0,}\]
 because $\Ext^3=0$. Hence $\Ext^2(S_x,-)\ne 0$ implies $\Ext^2(M,-)\ne 0$.
\end{proof}

\smallskip

For tree algebras of global dimension two, sequential walks are easy to characterize.
\begin{prop}
 Let $A$ be a tree algebra of global dimension two. Then the two notions of sequential walk and sequential pair coincide.
\end{prop}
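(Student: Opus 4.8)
The plan is to show that a reduced walk $w$ in $(Q,I)$ is a sequential walk in the sense of Definition~\ref{main def} if and only if it is a sequential pair in the sense recalled in Remark~\ref{rem 3}; since both notions come with a decomposition $w=uw'v$ in which $u,v$ point in the same direction and are the two distinguished relations, the task reduces to matching the remaining requirements on $u$, $v$ and $w'$. I would first record the structural input. Because the underlying graph of $Q$ is a tree, there is at most one path between any two points; hence $A$ is monomial, every relation being a single path and every branch of a relation being a path lying in $I$, the top relations are exactly the minimal zero relations, and between any two points there is at most one reduced walk. Being in addition of global dimension two, $A$ admits no pair of overlapping relations, by \cite{GHZ} --- the fact already used in the proof of Proposition~\ref{prop 4.1}.

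Two of the matchings are then immediate. First, since $A$ is monomial, a subpath of $w'$ or of $(w')^{-1}$ that equalled a branch of a relation would already lie in $I$, so the second clause of condition~(b) of Definition~\ref{main def} is subsumed by the first, and (b) says exactly that $w'$ contains no zero relation. Second, let $x$ be the point at which $w'$ leaves $u$. In a tree the unique reduced walk from $x$ to any other vertex of $u$ runs back along $u$, starting with the inverse of the last arrow of $u$; so if $w'$ met such a vertex, then $w=uw'v$ would contain a backtrack, contradicting reducedness. Hence $w'$ is disjoint from $u$, and symmetrically from $v$, apart from the two junction points. This yields condition~(c) at once and shows, more generally, that the phenomena obstructing a sequential pair from being a sequential walk over a non-tree string algebra --- for instance an arrow of $w'$ having an interior point of $u$ as an endpoint, as in the first example of Remark~\ref{rem 3} --- cannot arise here.

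What is left is to identify ``$w'$ contains no zero relation'' with ``$w$ contains exactly the two zero relations $u$ and $v$''. One direction is trivial. For the other, suppose $w'$ is free of zero relations and let $r$ be a minimal zero relation occurring in $w$. If $r$, read in one direction or the other, lies entirely inside $u$, minimality of the generators forces $r=u$; likewise $r=v$ if it lies inside $v$; it cannot lie inside $w'$; and if it straddled a junction point, then $r$ and $u$ (or $r$ and $v$) would share a nonempty segment that is a prefix of one minimal relation and a suffix of the other, i.e.\ an overlap, contradicting global dimension two. Hence $w$ contains precisely $u$ and $v$. Combining everything, the conditions of Definition~\ref{main def} hold for $w$ exactly when $w=uw'v$ is reduced with $u,v$ minimal zero relations pointing in the same direction and $w$ containing no further zero relation, which is the definition of a sequential pair.

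The delicate step is the straddling analysis, since a minimal zero relation may occur in $w$ in either orientation, and $u$ (respectively $v$) may appear as a branch or as the inverse of one; one must check, junction point by junction point and orientation by orientation, that a directed run of $w$ crossing a junction either stays inside the branch, stays inside $w'$, or produces an overlap --- the first alternative being ruled out by reducedness and the last by the global dimension hypothesis. The disjointness of $u$, $w'$ and $v$ in a tree is elementary but must likewise be checked with both orientations in mind; the rest is routine.
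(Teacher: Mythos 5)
Your proof is correct and follows essentially the same route as the paper's: tree implies monomial (so branches of relations are the relations themselves and condition (b) collapses to ``$w'$ contains no zero relation''), global dimension two rules out overlapping relations via \cite{GHZ}, and uniqueness of reduced walks in a tree forces $w'$ to be disjoint from $u$ and $v$, giving condition (c). The paper states this in three sentences; your version merely makes explicit the straddling and disjointness checks that the paper leaves implicit.
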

\begin{proof}
 Indeed, because $A$ is a tree algebra (not necessarily a string algebra) it is monomial, and the global dimension two means that no two top relations overlap. Because $Q$ is a tree, two  points of $Q$ are connected by a unique walk, and so a sequential walk as well as a sequential pair mean a walk of the form $w=uw'v$, where $u,v$ are monomial relations pointing to the same direction while $w'$ is any walk not containing a relation.
\end{proof}
\subsection{An application to laura algebras}
For our next corollary, we  recall a few notions. Given an algebra $A$, we denote by $\textup{ind}\, A$ a full subcategory of $\textup{mod}\,A$ consisting of exactly one representative from each isomorphism class of indecomposable modules. The \emph{left part} $\mathcal{L}_A$ of $\textup{mod}\,A$ consists of all modules $M$ in $\textup{ind}\, A$ such that, for every $L$ for which there exists a path of nonzero morphisms from $L$ to $M$, we have $\pd\,L\le 1$. The \emph{right part} $\mathcal{R}_A$ of  $\textup{mod}\,A$ is defined dually, and an algebra $A$ is called a \emph{laura algebra} if and only if $\mathcal{L}_A\cup\mathcal{R}_A$ is cofinite in $\textup{ind}\, A$.

In \cite{Dionne}, Dionne  has shown that a string algebra is laura if and only if its bound quiver does not contain a combinatorial configuration called intertwined double zero which we now define.
 
\begin{definition}\cite[2.1.1]{Dionne}\label{def 44} Let $A=\kk Q/I$ be a string algebra. A reduced walk $w$ in $Q$ is called an \emph{intertwined double zero} if $w=\rho_1w_1w_2w_3\rho_2$ where 
\begin{itemize}
\item [{\rm (a)}] $\rho_1=\za_1\ldots\za_n, \rho_2=\zb_1\ldots\zb_m$ are monomial relations pointing in the same direction,
\item [{\rm (b)}] neither $\za_2\ldots\za_n w_1w_2w_3\zb_1\ldots\zb_{m-1}$ nor its inverse contains a monomial relation, and
\item [{\rm (c)}] $w_2$ is a band.
\end{itemize}

The next corollary shows that one direction of Dionne's result follows from ours. Observe first that, if $A=\kk Q/I $ is a string algebra, and $w$ is an intertwined double zero then $w$ is a sequential walk in our sense.
\begin{cor}
 Let $A$ be a string algebra having an intertwined double zero. Then $A$ is not laura.
\end{cor}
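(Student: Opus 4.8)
The plan is to reduce this to the main theorem, Theorem \ref{thm 3.3}. The remark immediately preceding the corollary already does the combinatorial work: it observes that if $A = \kk Q/I$ is a string algebra and $w$ is an intertwined double zero in the sense of Definition \ref{def 44}, then $w$ is a sequential walk in the sense of Definition \ref{main def}. So the only thing the proof needs to carry out is to verify this observation carefully, and then invoke Theorem \ref{thm 3.3} to conclude that $A$ is not shod, and a fortiori not quasi-tilted; but since laura algebras are built from quasi-tilted-like left and right parts, one needs the sharper fact that a laura algebra which is not quasi-tilted still cannot contain such a configuration. Actually the cleanest route is: Dionne's classification says a string algebra is laura iff it has no intertwined double zero; since we are trying to re-derive one direction, we should instead argue directly that an algebra containing a sequential walk is not laura.

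Here is how I would organize it. First, translate the hypothesis: let $w = \rho_1 w_1 w_2 w_3 \rho_2$ be an intertwined double zero, with $\rho_1 = \za_1\cdots\za_n$ and $\rho_2 = \zb_1\cdots\zb_m$ monomial relations pointing in the same direction, $w_2$ a band, and no monomial relation inside $\za_2\cdots\za_n w_1 w_2 w_3 \zb_1\cdots\zb_{m-1}$ or its inverse. Set $u = \rho_1$, $v = \rho_2$, and $w' = \za_2\cdots\za_n \cdot$ (well, rather) $w' = w_1 w_2 w_3$ reading from the endpoint of $\rho_1$ to the startpoint of $\rho_2$ — more precisely one takes $w'$ to be the subwalk of $w$ strictly between the last vertex of $\rho_1$ and the first vertex of $\rho_2$. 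Then $u, v$ point to the same direction by (a), and each is a top relation since in a string (hence monomial) algebra every minimal monomial relation in $I$ is a top relation. Condition (b) of Definition \ref{main def} needs checking: no subpath of $w'$ or $(w')^{-1}$ lies in $I$ — this follows from condition (b) of Definition \ref{def 44} which says precisely that the slightly larger walk $\za_2\cdots\za_n w_1w_2w_3\zb_1\cdots\zb_{m-1}$ contains no monomial relation, and in a string algebra every relation is monomial, so "is a branch of a relation having a branch sharing a point with the $u_i$ or $v_i$" collapses to "is a monomial relation" plus a string-algebra argument that string algebras have no relations overlapping $w'$ beyond what (b) already excludes. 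Condition (c) — $w'$ shares no arrow with $u_i = \rho_1$ or $v_i = \rho_2$ — must be argued: since $w$ is a reduced walk and $w' $ is the portion between the relations, an arrow shared with $\rho_1$ or $\rho_2$ would, because string algebras have at most one arrow into/out of each vertex in each direction, force a cancellation contradicting reducedness.

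The main subtlety — and the step I expect to be the real obstacle — is condition (b) of Definition \ref{main def}, specifically the clause about $w'$ being a branch of a relation that shares a point with the $u_i$ or $v_i$. Since there is only one monomial relation through any given point of a string algebra's quiver issuing in a given direction (the string condition), and Dionne's condition (b) already forbids monomial relations inside the enlarged walk, one has to see that the only relations that could "interact" with the $u_i$'s and $v_j$'s are $\rho_1$ and $\rho_2$ themselves (and possibly relations sharing the source or target vertex), and that none of their branches is a subpath of $w'$ without creating a forbidden monomial relation inside the enlarged walk. I would spell this out using the defining axioms of a string algebra (the bound on in- and out-degree and the condition that for each arrow there is at most one arrow composable with it on each side that does not lie in $I$).

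Having verified $w$ is a sequential walk, the conclusion is immediate. By Theorem \ref{thm 3.3}, $A$ is not shod. Then I would finish by noting that a laura algebra which is not quasi-tilted has, by definition of the left and right parts, the property that $\mathcal L_A \cup \mathcal R_A$ is cofinite; but more to the point, the standard reduction $B$ of $A$ produced in Lemma \ref{lem 331} carries an indecomposable module of both projective and injective dimension $>1$ (Lemma \ref{lem 332}), and the reduction operations (passing to a full subcategory, then cutting arrows via a consistent cut) preserve the property of being laura — the full-subcategory case by the analogue of Lemma \ref{lem 1.2} for laura algebras due to Assem–Coelho, and the split-by-nilpotent-extension case by the analogue of Lemma \ref{lem 1.3} — so it suffices to see that $B$ itself is not laura. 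Concretely, the zigzag walk $w''$ generating a full subcategory of $B$ (a hereditary-type or gentle-type piece) gives rise to a family of string modules witnessing that $\mathcal L_B \cup \mathcal R_B$ is not cofinite, because the module $M(w'')$ of projective and injective dimension $>1$ produces, under the shift functors, infinitely many indecomposables outside $\mathcal L_B \cup \mathcal R_B$. Hence $B$ is not laura, and therefore $A$ is not laura.
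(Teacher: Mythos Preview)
Your argument has a genuine gap. The implication ``not shod $\Rightarrow$ not laura'' is false: laura only requires $\mathcal{L}_A\cup\mathcal{R}_A$ to be cofinite in $\textup{ind}\,A$, so a laura algebra may well possess finitely many indecomposables with both $\textup{pd}>1$ and $\textup{id}>1$. You recognise this and attempt a patch, but the patch is where the argument breaks down. The single module $M(w'')$ furnished by Lemma~\ref{lem 332} does not by itself produce infinitely many indecomposables outside $\mathcal{L}_B\cup\mathcal{R}_B$, and the appeal to unspecified ``shift functors'' is unfounded. Worse, passing to the standard reduction $B$ is counterproductive here: in forming $B$ one keeps only the sources and sinks of $w'$, so the band $w_2$ --- the one piece of data that distinguishes an intertwined double zero from a mere sequential walk --- does not survive.

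The paper's proof never reduces to $B$ and never invokes Theorem~\ref{thm 3.3}. Instead it exploits condition (c) of Definition~\ref{def 44} directly in $A$: since $w_2$ is a band, for every $n\ge 1$ the reduced walk $w_n=\rho_1 w_1 w_2^{\,n} w_3 \rho_2$ is again a sequential walk, and the associated string modules $M(w_n)$ are pairwise non-isomorphic (their dimension vectors differ) with $\textup{pd}\,M(w_n)>1$ and $\textup{id}\,M(w_n)>1$ by the argument of Lemma~\ref{lem 332}. This infinite family witnesses that $\mathcal{L}_A\cup\mathcal{R}_A$ is not cofinite, so $A$ is not laura. In short, the band is the whole point, and your proposal never uses it.
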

 
\begin{proof}
 It follows from  Definition \ref{def 44} that if $w=\rho_1w_1w_2w_3\rho_2$ as above is an intertwined double zero, then, for any $n\ge 1$, the reduced walk $w_n=\rho_1w_1w_2^nw_3\rho_2$ is also a sequential walk in the bound quiver of $A$. Because of Lemma \ref{lem 332}, we have $\pd\,M(w_n)>1$ and $\id\,M(w_n)>1$. In particular, the $M(w_n)$ form an infinite family of nonisomorphic indecomposable modules lying neither in $\mathcal{L}_A$ nor in $\mathcal{R}_A$. Thus $A$ is not laura.
\end{proof}
\end{definition}
 
\subsection{An application to 2-Calabi-Yau tilted algebras}
Let  $C=\kk Q/I$ be a quasi-tilted algebra. In particular, its relation extension $\widetilde C =\kk \widetilde Q/\widetilde I$, which is the trivial extension of $C$ by the bimodule $\textup{Ext}^2_C(DC,C)$, is a cluster-tilted algebra or a 2-Calabi-Yau tilted algebra of canonical type, see \cite[3.1]{ASS}. A walk $w=\za w'\zb$ in $(\widetilde Q,\widetilde I)$ is called a \emph{$C$-sequential walk}
if\begin{itemize}
\item [(i)] $w'$ consists entirely of old arrows, see \cite{ABS} for the terminology ``old'' vs ``new'' arrows;
\item[(ii)] $\za,\zb$ are new arrows corresponding respectively to old relations $\rho=\sum_i\zl_iu_i$ and  $\zs=\sum_j\mu_jv_j$;
\item[(iii)] For all $i,j$, the walk $w=u_iw'v_j$ is sequential in $(Q,I)$.
\end{itemize}
Then we have
\begin{cor}
 \label{cor 3.5}
 Let $C$ be a quasi-tilted algebra. Then the bound quiver of its relation extension $\widetilde C$ contains no $C$-sequential walk.
\end{cor}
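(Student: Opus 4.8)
The plan is to prove this by contradiction, using the main Theorem \ref{thm 3.3} together with the fact that a quasi-tilted algebra $C$ is shod, and that its relation extension $\widetilde C$ is well-behaved with respect to the reduction procedures set up in Section \ref{sect 2}. Suppose, for contradiction, that $(\widetilde Q, \widetilde I)$ contains a $C$-sequential walk $w = \alpha w' \beta$, where $\alpha, \beta$ are new arrows corresponding respectively to old relations $\rho = \sum_i \lambda_i u_i$ and $\sigma = \sum_j \mu_j v_j$, and $w'$ consists entirely of old arrows, with $u_i w' v_j$ sequential in $(Q,I)$ for all $i,j$.

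The key observation is that $C$ itself is a reduction of $\widetilde C$: by \cite[3.1]{ASS} (or \cite{ABS}), $\widetilde C$ is the trivial extension of $C$ by the bimodule $\textup{Ext}^2_C(DC,C)$, so the exact sequence $0 \to \textup{Ext}^2_C(DC,C) \to \widetilde C \to C \to 0$ realizes $\widetilde C$ as a split extension of $C$ by a nilpotent bimodule (indeed, this is a split-by-nilpotent extension obtained by cutting exactly the new arrows, using Theorem \ref{thm 1.5}). First I would verify that no points are dropped in passing from $\widetilde C$ to $C$ — only the new arrows are cut — so that $C$ is literally a reduction of $\widetilde C$ in the sense of Section \ref{sect 1.6}. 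Then, by Corollary \ref{cor 1.6}(a), if $\widetilde C$ were shod so would $C$ be; but we want to run the implication the other way, so instead I would argue directly that a $C$-sequential walk in $\widetilde C$ forces an indecomposable $\widetilde C$-module of projective and injective dimension both larger than one, contradicting nothing about $C$ unless we can transport it.

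The cleaner route, which I would take, is the following: show that a $C$-sequential walk $w = \alpha w' \beta$ in $\widetilde C$ actually \emph{is} a sequential walk in $(\widetilde Q, \widetilde I)$ in the sense of Definition \ref{main def}. The new arrows $\alpha, \beta$ are themselves branches of the new quadratic relations of $\widetilde C$ (each old relation $\rho = \sum_i \lambda_i u_i$ together with the new arrow $\alpha$ produces, in $\widetilde I$, relations of the form $u_i \alpha - (\text{something})$ or, in the canonical type case, relations making $\alpha$ a summand of a minimal relation through every $u_i$); these are top relations since $\alpha$ is an arrow, hence a shortest possible relation branch. Condition (iii), that $u_i w' v_j$ is sequential in $(Q,I)$ for all $i,j$, is precisely what guarantees condition (b) and (c) of Definition \ref{main def} for the walk $w' $ sitting between the new relations — no subpath of $w'$ lies in $\widetilde I$ or is a branch of a relation meeting the $u_i$ or $v_j$, because $w'$ consists of old arrows and the only new relations attached to the old arrows of $w'$ would already obstruct sequentiality of some $u_i w' v_j$ in $(Q,I)$. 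Once $w$ is seen to be a genuine sequential walk in $\widetilde C$, Theorem \ref{thm 3.3} gives that $\widetilde C$ is not shod; but then by Corollary \ref{cor 1.6}(a) applied to the reduction $C$ of $\widetilde C$ — wait, that goes the wrong way — so instead I invoke Lemma \ref{lem 332} to produce an explicit indecomposable $\widetilde C$-module $M(w'')$ of projective and injective dimension $>1$ and then observe it restricts to, or its support lies entirely in, the old part, giving a $C$-module with the same property, contradicting that $C$ is shod.

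The main obstacle I anticipate is the bookkeeping in the previous paragraph: precisely matching the new relations of the relation extension $\widetilde C$ (which may be minimal rather than monomial, and come in the bimodule-theoretic packaging of \cite{ABS}, \cite{ASS}) against clause (a) of Definition \ref{main def}, and confirming that the standard reduction $B$ of $\widetilde C$ attached to $w$ has its zigzag walk $w''$ supported on old points only, so that the bad module descends to $C$. A subtlety is the "canonical type" disjunct — when $\widetilde C$ is 2-Calabi-Yau tilted of canonical type rather than cluster-tilted, the new relations are slightly different, and I would need to check (or cite from \cite{ASS}) that in both cases each new arrow $\alpha$ is a branch of a quadratic top relation passing through each $u_i$, which is exactly what clause (a) and the definition of $C$-sequential walk (clause (ii)) are set up to provide. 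Modulo this identification, the statement follows immediately from Theorem \ref{thm 3.3} and Lemma \ref{lem 332}.
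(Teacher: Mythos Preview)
You are overcomplicating this. Condition (iii) in the definition of $C$-sequential walk already says that $u_i w' v_j$ is a sequential walk in $(Q,I)$ --- the bound quiver of $C$ itself, not of $\widetilde C$. So if a $C$-sequential walk exists, then $C$ contains a sequential walk, and Theorem~\ref{thm 3.3} applied directly to $C$ gives that $C$ is not shod, hence not quasi-tilted. That is the entire argument; no passage through $\widetilde C$, no reduction machinery, no Lemma~\ref{lem 332}, and no analysis of the relations of the relation extension are needed. The paper states the corollary without proof precisely because it is immediate in this way.

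Your ``cleaner route'' also contains a genuine error: you claim the new arrows $\alpha,\beta$ are themselves branches of quadratic top relations in $\widetilde C$, but a branch of a relation is by definition a path of length at least two, so a single arrow can never play the role of $u$ or $v$ in Definition~\ref{main def}. Thus $\alpha w'\beta$ is not a sequential walk in $(\widetilde Q,\widetilde I)$ in the required sense. Your final fallback --- building the string module over the standard reduction of $\widetilde C$ and restricting to $C$ --- might be salvageable, but it requires exactly the bookkeeping you yourself flag as problematic, and it is entirely avoidable once you read condition (iii) for what it gives you.
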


\subsection{Example}

\label{ex 1bis} 
We have seen in Example \ref{ex 1} that
the algebra $A$ given by the quiver
 \[\xymatrix{1\ar@<2pt>[r]^{\za_1}\ar@<-2pt>[r]_{\zb_1}
 &2 \ar@<2pt>[r]^{\za_2}\ar@<-2pt>[r]_{\zb_2}
&3}\]
bound by the relation $\za_1\za_2+\zb_1\zb_2 =0$ is a tilted algebra. Notice that here $w=(\za_1\za_2)\zb_2^{-1}\za_1^{-1}(\zb_1\zb_2)$ is not a sequential walk in the sense of Definition \ref{main def}, since the subpath $(w')^{-1}=\za_1\zb_2$ has arrows in common with the branches of the relation $u=\za_1\za_2$ and $v=\zb_1\zb_2$. In fact, this bound quiver contains no sequential walks.

On the other hand, the algebra $A'$ given by the same quiver but bound by the relation $\za_1\za_2=0$ contains evidently the sequential walk $w=(\za_1\za_2)\zb_2^{-1}\zb_1^{-1}(\za_1\za_2)$. In particular, Theorem \ref{thm 3.3} implies that this algebra is not tilted.

It is interesting to note that the relation extensions of both algebras $A$ and $A'$ have the same quiver $\widetilde Q$. Therefore the associated cluster categories $\mathcal{C}_A$ and $\mathcal{C}_{A'}$ are categorifications of the same cluster algebra $\mathcal{A}(\widetilde Q)$.

\end{document}